\documentclass[12pt]{amsart}

\setlength{\textheight}{21.5cm}
\setlength{\textwidth}{15cm}
\setlength{\evensidemargin}{.5cm}
\setlength{\oddsidemargin}{.5cm}

 \usepackage[dvips]{graphicx}
\usepackage{amsmath, amssymb}
\usepackage{pifont}

\numberwithin{equation}{section}
\newtheorem{theorem}{Theorem}[section]  
\newtheorem{theorem?}{``Theorem''}[section]  
\newtheorem{corollary}[theorem]{Corollary}

\newtheorem{lemma}[theorem]{Lemma}

\theoremstyle{definition}

\newtheorem{question}{Question}

\theoremstyle{remark}
\newtheorem{remark}[theorem]{Remark}  

\newcommand{\R}{{\mathbb R}}
\newcommand{\C}{{\mathbb C}}

\newcommand{\N}{{\mathbb N}}
\newcommand{\Z}{{\mathbb Z}}

\renewcommand{\a}{\alpha}

\begin{document}
\title[local zeta functions]
{
Non-polar singularities of 
local zeta functions \\
in some smooth case} 
\author{Joe Kamimoto and Toshihiro Nose} 
\dedicatory{Dedicated to Professor Takeo Ohsawa on the occasion of 
his retirement.}
\address{Faculty of Mathematics, Kyushu University, 
Motooka 744, Nishi-ku, Fukuoka, 819-0395, Japan} 
\email{
joe@math.kyushu-u.ac.jp}
\address{Faculty of Engineering, Fukuoka Institute of Technology, 
Wajiro-higashi 3-30-1, Higashi-ku, Fukuoka, 811-0295, Japan}
\email{ 
nose@fit.ac.jp}
\keywords{local zeta functions, 
meromorphy,
critical integrability index}
\subjclass[2010]{58K55 (26B15, 11M41).}
\maketitle


\begin{abstract}
It is known that local zeta functions
associated with real analytic functions 
can be analytically continued as meromorphic functions 
to the whole complex plane.
In this paper, 
the case of specific  (non-real analytic) smooth functions
is precisely investigated. 
Indeed, 
asymptotic limits of the respective local zeta functions 
at some singularities in one direction are 
explicitly computed. 
Surprisingly, 
it follows from these behaviors that 
these local zeta functions 
have singularities different from poles.  
\end{abstract}




\section{Introduction}

Let us consider the following integrals of the form:
\begin{equation}\label{eqn:1.1}
Z_f(\varphi)(s)=\int_{\mathbb{R}^2}|f(x,y)|^s \varphi(x,y)dxdy
\quad\quad \mbox{for $s\in \C$,}
\end{equation}
where
$f,\varphi$ are real-valued ($C^{\infty}$) smooth functions 
defined on an open neighborhood $U$ of the origin 
in $\mathbb{R}^2$
and the support of $\varphi$ is contained in $U$.
Since the integrals 
$Z_f(\varphi)(s)$ 
converge locally uniformly 
on the region: ${\rm Re}(s)>0$,
they become holomorphic functions there.
It has been known in many cases that
they can be holomorphically continued to wider regions. 
After this process, these integrals become holomorphic functions 
on domains containing the region: ${\rm Re}(s)>0$, which
are sometimes called {\it local zeta functions}.  
In this paper, we are interested in the case when 
$f$ satisfies the condition:
\begin{equation}\label{eqn:1.2}
f(0,0)=0, \quad \nabla f(0,0)=(0,0)
\end{equation}
and hereafter we always assume this condition. 
(Our issues in this paper are easy 
unless (\ref{eqn:1.2}) is satisfied.)

In order to see the region where $Z_f(\varphi)$ can be holomorphically 
continued, the following index plays important roles: 
\begin{equation}\label{eqn:1.3}
c_0(f):=\sup
\left\{\mu>0:
\begin{array}{l} 
\mbox{there exists an open neighborhood $V$ of} \\
\mbox{the origin in $U$ such that $|f|^{-\mu}\in L^1(V)$}
\end{array}
\right\}.
\end{equation}
This  $c_0(f)$ is called the {\it critical integrability index} of $f$
(it is also called {\it log canonical threshold} or 
{\it singularity exponent}).
The determination of the value of $c_0(f)$ 
is an important issue in the singularity theory and 
there have been many interesting works from many points of view 
(this problem will be discussed soon later).
In order to see a clear relationship between the index 
$c_0(f)$ and the region where $Z_f(\varphi)(s)$ becomes holomorphic, 
we assume in the Introduction 
that $\varphi$ satisfies the condition:
\begin{equation}\label{eqn:1.4}
\varphi(0,0)>0, \quad \varphi(x,y)\geq 0 \quad \mbox{ on $U$}.
\end{equation}
Indeed, under this condition (\ref{eqn:1.4}), 
the relationship between the convergence of 
the integrals (\ref{eqn:1.1}) and their holomorphy
implies the equality:
\begin{equation}\label{eqn:1.5}
c_0(f)=\sup
\left\{\rho>0:
\begin{array}{l} 
\mbox{The domain where $Z_f(\varphi)$ can be holomorphically} \\
\mbox{continued contains the region: ${\rm Re}(s)>-\rho$}
\end{array}
\right\}
\end{equation}
Without the condition (\ref{eqn:1.4}), 
the right handside of (\ref{eqn:1.5}) 
may be greater than $c_0(f)$
(see \cite{KaN16tams}, etc.).

The above equality (\ref{eqn:1.5}) means that 
$Z_f(\varphi)$ is holomorphic 
on the region: ${\rm Re}(s)>-c_0(f)$ and, moreover, that  
$Z_f(\varphi)$ has some singularities on the vertical line: 
${\rm Re}(s)=-c_0(f)$.
More exactly, 
the following lemma implies that
$Z_f(\varphi)$ must have a singularity at $s=-c_0(f)$.
\begin{lemma}\label{lemma:1.1}
$Z_f(\varphi)$ cannot be holomorphically continued
to any open neighborhood of $s=-c_0(f)$.
\end{lemma}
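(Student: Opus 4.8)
The plan is to exploit the hypothesis (\ref{eqn:1.4}) so that $|f|^s\varphi$ has a definite sign for real $s$, which turns the behavior of $Z_f(\varphi)$ on the real axis into a monotonicity-plus-blowup argument. First I would recall from (\ref{eqn:1.5}) that $Z_f(\varphi)$ is holomorphic on $\{{\rm Re}(s)>-c_0(f)\}$, so the only thing to prove is that no holomorphic continuation exists across the point $s_0=-c_0(f)$ itself. Suppose for contradiction that $Z_f(\varphi)$ extends holomorphically to an open disc $D$ centered at $s_0$; then in particular it is continuous on $D\cap\R$, so $\lim_{s\downarrow s_0}Z_f(\varphi)(s)$ exists and is finite.

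Next I would set up the real-variable side. For $s>s_0$ real we have $Z_f(\varphi)(s)=\int_{\R^2}|f(x,y)|^s\varphi(x,y)\,dx\,dy$ with a nonnegative integrand (by (\ref{eqn:1.4})). Shrinking the neighborhood if necessary we may assume $|f|\le 1$ on a neighborhood $V$ of the origin on which $\varphi>0$, so that $s\mapsto |f(x,y)|^s$ is nonincreasing in $s$ for $(x,y)\in V$. Splitting the integral into the part over $V$ and the part over the complement (the latter being an entire function of $s$, since $f$ is bounded away from $0$ there), the interesting term $\int_V|f|^s\varphi\,dx\,dy$ is a decreasing function of real $s$. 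Hence by the monotone convergence theorem,
\begin{equation}\label{eqn:MCT}
\lim_{s\downarrow s_0}\int_V|f(x,y)|^s\varphi(x,y)\,dx\,dy
=\int_V|f(x,y)|^{s_0}\varphi(x,y)\,dx\,dy
=\int_V|f(x,y)|^{-c_0(f)}\varphi(x,y)\,dx\,dy.
\end{equation}
If the hypothetical continuation existed, the left-hand side would be finite, so $|f|^{-c_0(f)}\varphi\in L^1(V)$; since $\varphi$ is bounded below by a positive constant on a smaller neighborhood $V'$, this would give $|f|^{-c_0(f)}\in L^1(V')$.

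Finally I would derive the contradiction from the definition (\ref{eqn:1.3}) of $c_0(f)$: I claim $|f|^{-c_0(f)}\notin L^1(W)$ for every neighborhood $W$ of the origin. Indeed, if $|f|^{-c_0(f)}$ were integrable on some $W$, then a standard argument (Hölder's inequality applied to $|f|^{-c_0(f)}\cdot\1_W$, or the dominated convergence theorem combined with the fact that $|f|^{-\mu}\uparrow|f|^{-c_0(f)}$ pointwise on $\{|f|<1\}$ as $\mu\uparrow c_0(f)$ and $|f|^{-\mu}\le\max\{1,|f|^{-c_0(f)}\}$) would show $|f|^{-\mu}\in L^1(W)$ for some $\mu>c_0(f)$, contradicting the supremum in (\ref{eqn:1.3}). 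Combining this with the conclusion of the previous paragraph yields the desired contradiction, so $Z_f(\varphi)$ cannot be holomorphically continued to any neighborhood of $s=-c_0(f)$.

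The main obstacle is the last step, namely upgrading ``$|f|^{-c_0(f)}\in L^1$'' to ``$|f|^{-\mu}\in L^1$ for some $\mu>c_0(f)$'': a priori the critical exponent need not be attained, so one must argue that integrability exactly at the threshold forces integrability slightly past it. The cleanest route is the Hölder/reverse-interpolation estimate $\int_W|f|^{-\mu}\le\big(\int_W|f|^{-c_0(f)}\big)^{\theta}\cdot\big(\int_W 1\big)^{1-\theta}$ for a suitable $\theta\in(0,1)$ when $\mu<c_0(f)$, which shows the relevant function $\mu\mapsto\int_W|f|^{-\mu}$ cannot jump from finite to infinite at $c_0(f)$ from the right side; alternatively, one notes that the set of $\mu$ for which $|f|^{-\mu}$ is locally integrable is an interval open on the right precisely because of this convexity. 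Everything else is routine: sign positivity from (\ref{eqn:1.4}), monotone convergence on the real axis, and the observation that a genuine holomorphic continuation would force the real one-sided limit to be finite.
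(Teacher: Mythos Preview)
Your argument has a genuine gap at the final step. From the assumed holomorphic continuation you correctly deduce, via monotone convergence, that $|f|^{-c_0(f)}\in L^1(V')$ for some neighborhood $V'$ of the origin. But your claim that this forces $|f|^{-\mu}\in L^1$ for some $\mu>c_0(f)$ is \emph{false} in general, and in fact is refuted by the very functions studied in this paper. In case (iii) of Theorem~\ref{thm:1.1} (equivalently Theorem~\ref{thm:5.1}), with $0<p<1-a/b$, the limit $\lim_{\sigma\to -1/b+0}Z(\sigma)$ is finite, hence $|f|^{-1/b}$ is locally integrable; yet $c_0(f)=1/b$ and $|f|^{-\mu}$ is not locally integrable for any $\mu>1/b$. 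Section~6.2 makes this explicit: the set $H(f)$ need not be open on the right. Your H\"older estimate $\int_W|f|^{-\mu}\le\big(\int_W|f|^{-c_0(f)}\big)^{\theta}|W|^{1-\theta}$ only works for $\mu<c_0(f)$, so it proves nothing about exponents beyond the threshold; the convexity of $\mu\mapsto\log\int|f|^{-\mu}$ likewise gives no one-sided openness.

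The paper's proof avoids this trap by proving a Landau-type result (Theorem~\ref{prop:3.1}): it exploits the \emph{full} assumed holomorphic extension, not merely the finiteness of the one-sided real limit. One Taylor-expands $Z_f(\varphi)$ about the interior point $\rho+1$ (where $\rho=-c_0(f)$) on a disc of radius $1+2\delta$ contained in the extended domain, computes the derivatives as $\int|f|^{\rho+1}(\log|f|)^j\varphi$, and evaluates at $\rho-\delta$. Positivity of every term (since $|f|<1$ makes $(-1)^j(\log|f|)^j\ge 0$) allows the interchange of sum and integral, yielding directly that $\int|f|^{\rho-\delta}\varphi<\infty$. This lands strictly past the threshold without ever needing the false implication you invoke. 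The moral is that ``holomorphic near $s_0$'' is strictly stronger than ``bounded as $\sigma\to s_0{+}$'', and only the former pushes integrability beyond $c_0(f)$.
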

The proof of the above lemma will be given in Section~6. 
The purpose of this paper is to consider the following question:
\begin{question}
What kind of singularity does $Z_f(\varphi)$ have at $s=-c_0(f)$?
\end{question}

Under certain assumptions of $f$,
the integrals $Z_f(\varphi)(s)$
have meromorphic continuation to the whole complex plane and, 
in particular, they have a pole at $s=-c_0(f)$.
A brief history of the studies done on
this phenomena
is as follows.
For a meanwile, 
the general dimensional cases are treated.   
In 1954 in an invited talk at ICM Amsterdam, 
I. M. Gel'fand conjectured that
if $f$ is a polynomial and the support of $\varphi$ is
sufficiently small,
then 
the integrals $Z_f(\varphi)(s)$
can be analytically continued as meromorphic functions
to the whole complex plane.
The case when $f$ are monomials is investigated in \cite{GeS64}.
Gel'fand's conjecture was affirmatively solved as
a stronger form by
Bernstein, S. I. Gel'fand \cite{BeG69}
and  Atiyah \cite{Ati70} independently.
They showed that
if $f$ is real analytic
and
the support of $\varphi$ is sufficiently small,
then
$Z_f(\varphi)$ can be analytically
continued as a meromorphic function
to the whole complex plane and
their poles belong to the union of finite number of arithmetic progressions
which consist of negative rational numbers.
Their proofs use Hironaka's resolution of singularities 
\cite{Hir64}.
The authors \cite{KaN16jmsut}  
generalize these results 
when $f$ belongs to some class of smooth functions.

More precisely, let us consider an issue about 
the determination of the value of $c_0(f)$.
We are also interested in more detailed local behavior of 
$Z_f(\varphi)$ near $s=-c_0(f)$. 
In the real analytic case, 
this issue is to decide the location and order of 
the leading pole for $Z_f(\varphi)$.  
In the seminal work of Varchenko \cite{Var76},
when $f$ is real analytic and satisfies some conditions,  
$c_0(f)$ can be expressed by using 
the {\it Newton polyhedron} of $f$ as
\begin{equation}\label{eqn:1.6}
c_0(f)=1/d(f),
\end{equation}
where $d(f)$ is the {\it Newton distance} of $f$
(see \cite{Var76}, \cite{AGV88}) and the order of pole
at $s=-1/d(f)$ depends on some topological 
information of the Newton polyhedron of $f$.
He uses the theory of toric varieties 
based on the geometry of Newton polyhedra.
More detailed situation of meromorphic continuation 
of $Z_f(\varphi)$ is investigated in 
\cite{DS89}, \cite{DS92}, \cite{DNS05}, \cite{OkT13}, etc. 
A recent interesting work \cite{CGP13} treating the equality 
(\ref{eqn:1.6}) is from another approach.
In the same paper \cite{Var76}, 
Varchenko more deeply investigated the two-dimensional case. 
Indeed, without any assumption, he shows that 
the equality (\ref{eqn:1.6}) holds for real analytic $f$
on {\it adapted coordinates}. 
Here adapted coordinates are important coordinates 
in the study of oscillatory integrals 
and their existence is shown in two-dimensions 
in \cite{Var76}, \cite{PSS99}, \cite{IkM11tams}, etc.
More generally, let us consider the smooth case. 
In the above cited paper \cite{KaN16jmsut},
the authors show that Varchenko's result can be naturally
generalized in a certain restricted class of smooth functions. 
On the other hand, 
M. Greenblatt \cite{Gre06} obtains a sharp result 
which generalizes the Varchenko's two-dimensional result.
\begin{theorem}[Greenblatt \cite{Gre06}]
When $f$ is a smooth function defined on $U$ in $\R^2$,  
the equality (\ref{eqn:1.6}) holds on adapted coordinates. 
\end{theorem}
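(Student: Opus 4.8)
The plan is to establish the two inequalities $c_0(f)\le 1/d(f)$ and $c_0(f)\ge 1/d(f)$ separately, working throughout in the given adapted coordinates $(x,y)$ and studying the integrability of $|f|^{-\mu}$ on a small neighbourhood of the origin. The whole argument is an explicit, Newton-polygon-driven substitute for Hironaka's resolution of singularities in two variables: one decomposes a neighbourhood of $0$ into finitely many ``curved sectors'', on each of which $f$ is made comparable to a monomial times a nonvanishing smooth factor by an elementary change of variables, thereby reducing each piece to a one-variable integrability question.

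For the bound $c_0(f)\le 1/d(f)$ (which in fact holds in any coordinates), let $\gamma$ be a face of the Newton polyhedron $\Gamma_+(f)$ containing the point $(d(f),d(f))\in\d\Gamma_+(f)$, with outward normal $(\kappa_1,\kappa_2)$ and face equation $\kappa_1 a+\kappa_2 b=m$, so that $(\kappa_1+\kappa_2)\,d(f)=m$. If $\gamma$ is noncompact one uses a rectangular slab adjacent to the corresponding axis; if $\gamma$ is a compact edge with quasihomogeneous part $f_\gamma$, one picks $c$ with $f_\gamma(1,c)\neq 0$ (possible since $f_\gamma\not\equiv 0$) and a thin cuspidal region $\{\,|y-c\,x^{\kappa_2/\kappa_1}|<\delta\,|x|^{\kappa_2/\kappa_1},\ 0<x<\epsilon\,\}$, on which $|f|$ is comparable to $|x|^{m/\kappa_1}$. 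A short computation of $\int|f|^{-\mu}$ over that region, using the face equation, shows the integral diverges precisely for $\mu\ge 1/d(f)$, hence $c_0(f)\le 1/d(f)$.

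For the reverse bound $c_0(f)\ge 1/d(f)$ — where adaptedness is indispensable — one carries out the two-dimensional Newton-polygon resolution. List the compact edges of $\Gamma_+(f)$ in order; between consecutive edge directions, and in the two extremal sectors abutting the axes, $f$ is comparable to the monomial attached to the intervening vertex of $\Gamma_+(f)$, and a direct computation shows $\int(\text{monomial})^{-\mu}$ over such a sector converges for every $\mu<1/d(f)$. On each ``edge sector'' $\{|y|\sim|x|^{a}\}$ one localizes further around each real root of the associated edge polynomial: a simple root contributes, after the substitution $y\mapsto y-(\text{root curve})$, a monomial times a nonvanishing factor, which is harmless, while a root of multiplicity $m$ calls for a substitution $y\mapsto y-\psi(x)$ with $\psi$ a finite Taylor--Puiseux approximation of a root curve of $f$ itself, after which $f$ becomes a new smooth function whose Newton polyhedron in the new variables has strictly simpler combinatorics. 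Adaptedness enters exactly here: in the main, compact-edge case it is equivalent to the statement that no root curve of the principal part has multiplicity exceeding $d(f)$, which forces each such $m$ to be $\le d(f)$, so the Newton distance never increases under these substitutions and the recursion terminates after finitely many steps, each step preserving $\mu$-integrability for $\mu<1/d(f)$. Summing over the finitely many pieces yields $|f|^{-\mu}\in L^1$ near $0$ for all $\mu<1/d(f)$, i.e.\ $c_0(f)\ge 1/d(f)$.

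The main obstacle is the recursive step in the edge sectors. Because $f$ is merely $C^\infty$, its root curves are not analytic and must be replaced by finite jets, so one has to track the error terms and verify that, after the substitution, the remainder is genuinely negligible relative to the leading monomial on the localized region, and that ``strictly simpler combinatorics'' really furnishes a well-founded induction, e.g.\ via a lexicographically decreasing pair built from the degree and the worst root-multiplicity of the edge polynomial. Two degenerate situations also need separate, short treatments: when a localized piece of $f$ is flat, so its Newton polyhedron is empty, $d=\infty$ and $1/d=0$ and there is nothing to prove; and when $f$ vanishes to infinite order along a curve through $0$, which one must check is compatible with the definitions of $d(f)$ and of adapted coordinates. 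Once these points are settled, the estimates on each sector reduce to elementary one-variable integrals.
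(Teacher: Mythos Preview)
This theorem is not proved in the paper at all; it is quoted as a result of Greenblatt \cite{Gre06} and used as a black box (e.g.\ in Section~3 to assert that $Z(\sigma)$ converges for $\sigma>-1/b$ and diverges for $\sigma<-1/b$). So there is no ``paper's own proof'' against which to compare your proposal.

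That said, your outline is a faithful sketch of Greenblatt's original argument in \cite{Gre06}: the bound $c_0(f)\le 1/d(f)$ via a cuspidal sector along a face of the Newton polygon, and the bound $c_0(f)\ge 1/d(f)$ via a Newton-polygon-driven decomposition of a neighbourhood of the origin into sectors, with an inductive reduction near multiple real roots of the edge polynomials. Your identification of the role of adaptedness---that on the principal compact edge no real root of $f_\gamma$ has multiplicity exceeding $d(f)$, so the Newton distance is nonincreasing along the recursion---is exactly the mechanism Greenblatt exploits. The caveats you flag are the genuine ones: in the $C^\infty$ setting there are no Puiseux expansions, so the root branches must be produced by the implicit function theorem applied to the edge polynomial and the remainder estimates tracked carefully, and termination of the induction requires an explicit complexity measure on the Newton data. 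Greenblatt's paper supplies these details; your sketch correctly captures the architecture but would need substantial expansion to stand on its own.
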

In more detail, in his same paper \cite{Gre06}, 
Greenblatt explains the delicate situation
about the local integrability of $|f|^{-\mu}$ 
around $\mu=c_0(f)$ in the smooth case
by using the specific function:
\begin{equation}\label{eqn:1.7}
f(x,y)=x^a y^b+x^a y^{b-2} e^{-1/|x|^{1/(2b)}},
\end{equation}
where $a,b$ are nonnegative integers satisfying 
$a<b$ and $2\leq b$ 
(see Remark~3.2 in this paper for his result).
Note that the second term in (\ref{eqn:1.7}) is a 
(non-real analytic) flat function. 
The purpose of this paper is to investigate 
a slight generalization of the above example more deeply 
and to understand detailed situation 
of analytic continuation of the 
respective local zeta functions.

This paper is organized as follows.
In Section~2, 
we state a main theorem showing the failure
of meromorphy of some local zeta functions. 
In order to show the main theorem, 
we substantially investigate similar integrals $Z(\sigma)$ 
in Section~3.
The main theorem is shown in Section~4
by using the results in Section~3. 
In Section~5, we give some property of domains of 
convergence of the integrals $Z_f(\varphi)(s)$,
which is analogous to Landau's theorem on the Dirichlet series 
with positive coefficients.
This result implies Lemma~1.1. 
Our computations in this paper are very specific 
and it is hoped to give good observation for future studies about 
properties of local zeta functions in the general smooth case. 
From our results, 
many elementary (but probably not so easy) questions
are naturally raised, 
some of which are listed in Section~6.

In this paper, we use  
$C, C_1, C_2, \epsilon, \delta$ for
various kinds of constants without further comments.


\section{Main results}

In this section, we consider 
the integrals $Z_f(\varphi)(s)$
with
smooth functions $f$ of the following form:
\begin{equation}\label{eqn:2.1}
f(x,y)=x^a y^b+x^a y^{b-q} e^{-1/|x|^p},
\end{equation}
where
$a,b,p,q$ satisfy 
\begin{itemize}
\item $a,b,q$ are nonnegative integers satisfying 
$a<b$, $2\leq b$, $1\leq q\leq b$;
\item $p$ is a positive real number.
\end{itemize}

We remark that $e^{-1/|x|^p}$ is regarded as 
a smooth function defined on $\R$ 
by considering that its value at $0$ takes $0$. 
As mentioned in the Introduction, 
the above function $f$ slightly 
generalizes the function (\ref{eqn:1.7}) which 
is investigated by Greenblatt \cite{Gre06}
(he considers the case when $q=2$ and $p=1/(2b)$).
Note that the coordinate $(x,y)$ satisfies 
the adapted coordinates conditions in \cite{IkM11tams}  
and that the Newton distance 
of $f$ is $b$ (i.e., $d(f)=b$). 

Let us consider the case when 
the second term, which is flat, 
does not appear in (\ref{eqn:2.1})
(i.e., $f(x,y)=x^a y^b$).
From \cite{GeS64}, it is easy to see that
$Z_f(\varphi)$ can be regarded as a meromorphic function
on $\C$ and the poles of $Z_f(\varphi)$ are contained
in the set $\{-j/a,-k/b:j,k\in\N\}$. 
When $\varphi$ satisfies (\ref{eqn:1.4}), 
the leading pole exists
at $s=-1/b$, whose order is one.

Now, when $f$ is as in (\ref{eqn:2.1}), 
it follows from
Theorem~1.2 and Lemma~1.1 in the Introduction 
that $Z_f(\varphi)$ is holomorphic on 
the region: ${\rm Re}(s)>-1/b$ and 
that $Z_f(\varphi)$
has a singularity at $s=-1/b$. 
(Note that $c_0(f)=1/b$.) 
More precisely, 
we see the behavior at $-1/b$ of the restriction of
$Z_f(\varphi)$ to the real axis as follows. 
In this paper, we use the symbol $s=\sigma+i t$
with $\sigma, t\in\R$ which is 
traditionally used in the analysis of the Riemann 
zeta function. 
\begin{theorem}\label{thm:1.1}
Let $f$ be as in \eqref{eqn:2.1}
and $\varphi$ as in \eqref{eqn:1.1}.
We assume that $q$ is even.
Then
the following hold:
\begin{enumerate}
\item
If $p>1-a/b$, 
then
\begin{equation}\label{eqn:2.2}
\lim_{\sigma\to-1/b+0}
(b\sigma+1)^{1-\frac{1-a/b}{p}}\cdot Z_f(\varphi)(\sigma)
=4A\cdot\varphi(0,0)
\end{equation}
where $A$ is the positive constant defined by 
\begin{equation}\label{eqn:A}
A=\int_0^{\infty}x^{-a/b}(1-e^{-1/(qx^p)})dx.
\end{equation}
Note that the above improper integral converges. 
\item
If $p=1-a/b$, 
then 
\begin{equation}\label{eqn:2.3}
\lim_{\sigma\to-1/b+0}
|\log (b\sigma+1)|^{-1}\cdot Z_f(\varphi)(\sigma)
=\frac{4}{pq}\cdot\varphi(0,0).
\end{equation}
\item
If $0<p< 1-a/b$, then
there exists a constant $B(\varphi)$, 
which depends on $a,b,p,q,\varphi$ but is independent
of $\sigma$, such that 
\begin{equation}\label{eqn:2.4}
\lim_{\sigma\to-1/b+0}Z_f(\varphi)(\sigma)=B(\varphi),
\end{equation}
where $B(\varphi)$ is positive 
if $\varphi$ satisfies the condition (\ref{eqn:1.4}). 
\end{enumerate}
\end{theorem}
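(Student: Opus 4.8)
The plan is to reduce the two-dimensional integral $Z_f(\varphi)(\sigma)$ to a one-dimensional problem by integrating out the $y$-variable, and then to extract the leading asymptotics as $\sigma\to-1/b+0$. First I would localize: since the singularity comes only from a neighborhood of the origin, write $\varphi=\varphi(0,0)+(\varphi-\varphi(0,0))$ and argue that the error term (vanishing at the origin) contributes something less singular, so it suffices to treat $\varphi\equiv\varphi(0,0)$ on a small box, say $|x|,|y|\le\delta$; the factor $4$ in \eqref{eqn:2.2}--\eqref{eqn:2.3} will come from the four quadrants (here the hypothesis that $q$ is even is used so that $y^{b-q}$ and $y^b$ have the same sign behavior, and $f$ factors cleanly as $x^a y^{b-q}(y^q+e^{-1/|x|^p})$ with $a$ and $b-q$ of fixed parity on each quadrant). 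Thus the core object is, up to constants,
\begin{equation*}
\int_0^{\delta}\int_0^{\delta} x^{a\sigma} y^{(b-q)\sigma}\bigl(y^q+e^{-1/x^p}\bigr)^{\sigma}\,dy\,dx .
\end{equation*}

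Next I would perform the inner $y$-integral. Substituting $y=\left(e^{-1/x^p}\right)^{1/q}u$ turns the inner integral into $\left(e^{-1/x^p}\right)^{((b-q)\sigma+\sigma q+1)/q}$ times $\int_0^{\delta'} u^{(b-q)\sigma}(u^q+1)^\sigma\,du$, i.e. $e^{-(b\sigma+1)/(qx^p)}$ times a $y$-integral that, as $\sigma\to-1/b$, behaves like $\int_0^\infty u^{(b-q)\sigma}(1+u^q)^\sigma du$. One checks this limiting integral converges (the exponent at $u=\infty$ is $b\sigma\to-1<-1/q$ after accounting for $u^q$) and is a continuous, finite, nonzero function of $\sigma$ near $-1/b$; call its value near the endpoint $\Phi(\sigma)\to\Phi(-1/b)>0$. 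So the problem collapses to understanding
\begin{equation*}
\Phi(\sigma)\int_0^{\delta} x^{a\sigma}\, e^{-(b\sigma+1)/(qx^p)}\,dx
\end{equation*}
as $\sigma\to-1/b+0$, where the key small parameter is $\lambda:=b\sigma+1\downarrow 0$ and $x^{a\sigma}\to x^{-a/b}$.

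The heart of the matter is then the one-variable Laplace-type asymptotics of $I(\lambda)=\int_0^\delta x^{-a/b}e^{-\lambda/(qx^p)}\,dx$ as $\lambda\to 0^+$. Here I would substitute $x=\lambda^{1/p}t$ to get $I(\lambda)=\lambda^{(1-a/b)/p}\int_0^{\delta/\lambda^{1/p}} t^{-a/b}e^{-1/(qt^p)}\,dt$; the remaining integral tends to $\int_0^\infty t^{-a/b}e^{-1/(qt^p)}\,dt$ only when that improper integral converges at $t=\infty$, i.e. exactly when $-a/b>-1$ is not enough — convergence at infinity needs nothing (the factor $t^{-a/b}$ with $a/b<1$ is integrable near $0$ and the exponential kills nothing at $\infty$)... wait: at $t=\infty$ the integrand is $\sim t^{-a/b}$, which is \emph{not} integrable since $a/b<1$. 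This is precisely the trichotomy. Writing $t^{-a/b}e^{-1/(qt^p)} = t^{-a/b} - t^{-a/b}(1-e^{-1/(qt^p)})$, the second piece is $O(t^{-a/b-p})$ at infinity, hence integrable iff $p>1-a/b$ — this is case (i), and the convergent integral $A$ in \eqref{eqn:A} is exactly $\int_0^\infty t^{-a/b}(1-e^{-1/(qt^p)})dt$, while the divergent piece $\int^{}t^{-a/b}dt$ over the truncated range produces a term that, after multiplying back by $\lambda^{(1-a/b)/p}$, is $O(1)$ rather than blowing up, so one gets $Z_f(\varphi)(\sigma)\sim \text{const}\cdot\lambda^{(1-a/b)/p-1}$ — matching \eqref{eqn:2.2} after pinning down the constant as $4A\,\varphi(0,0)$ by also using $\Phi(-1/b)$ and the Jacobian factors. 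When $p=1-a/b$ the divergent piece is $\int^{}t^{-1}dt\sim\log(1/\lambda^{1/p})$, giving the $|\log(b\sigma+1)|$ growth of \eqref{eqn:2.3} with constant $\frac{4}{pq}\varphi(0,0)$ after bookkeeping. When $p<1-a/b$ the integral $\int_0^\infty t^{-a/b}e^{-1/(qt^p)}dt$ still diverges at infinity, but now $I(\lambda)=\int_0^\delta x^{-a/b}e^{-\lambda/(qx^p)}dx\to\int_0^\delta x^{-a/b}dx<\infty$ directly by dominated convergence (no rescaling needed), so $Z_f(\varphi)(\sigma)$ has a finite limit $B(\varphi)$ — and positivity under \eqref{eqn:1.4} follows because the integrand is nonnegative and $\varphi(0,0)>0$; this is case (iii).

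The main obstacle I anticipate is not the Laplace asymptotics itself but the \emph{uniformity and error control}: justifying that replacing $x^{a\sigma}$ by $x^{-a/b}$, replacing $\Phi(\sigma)$ by $\Phi(-1/b)$, restricting to $|x|,|y|\le\delta$, and replacing $\varphi$ by $\varphi(0,0)$ each introduce only errors that are $o$ of the stated leading term. Concretely one must show the $\varphi$-remainder term, which carries an extra vanishing factor like $|x|+|y|$, yields a strictly smaller power of $\lambda$ (or a bounded quantity in case (i) with the larger exponent), and that the tails $|x|>\delta$ or $|y|>\delta$ are harmless because there $|f|\ge c>0$; and one must check the interchange of limit and integral in the rescaled integral via a dominating function independent of $\sigma$ near $-1/b$. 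Once those estimates are in place, the three cases fall out of the elementary computation of $\int_0^\infty t^{-a/b}e^{-1/(qt^p)}\,dt$-type integrals sketched above, and the explicit constants $4A\,\varphi(0,0)$ and $\frac{4}{pq}\varphi(0,0)$ are obtained by tracking the Jacobians from the two substitutions together with the four-quadrant factor.
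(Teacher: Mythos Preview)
The central reduction in your argument breaks down. After the substitution $y=e^{-1/(qx^p)}u$ the inner integral is
\[
\int_0^{\delta'} u^{(b-q)\sigma}(1+u^q)^\sigma\,du,\qquad \delta'=\delta\,e^{1/(qx^p)},
\]
and you replace this by $\Phi(\sigma)=\int_0^\infty u^{(b-q)\sigma}(1+u^q)^\sigma\,du$, claiming it is finite and continuous near $\sigma=-1/b$. But at $u\to\infty$ the integrand is $\sim u^{b\sigma}$, which is integrable only for $b\sigma<-1$; on the whole range $\sigma>-1/b$ where you work one has $\Phi(\sigma)=+\infty$. The upper limit $\delta'$ depends on $x$ and cannot be discarded: the tail $\int_1^{\delta'}u^{b\sigma}\,du=\frac{(\delta')^{b\sigma+1}-1}{b\sigma+1}+O(1)$, multiplied by the prefactor $e^{-(b\sigma+1)/(qx^p)}$, produces $\frac{\delta^{b\sigma+1}-e(x)^{b\sigma+1}}{b\sigma+1}$ with $e(x)=e^{-1/(qx^p)}$. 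It is precisely this $x$-coupled $1/(b\sigma+1)$ that drives the blow-up in cases (i) and (ii), and it is invisible under your factorisation. Correspondingly, your second integral $I(\lambda)=\int_0^\delta x^{-a/b}e^{-\lambda/(qx^p)}dx$ is \emph{bounded} as $\lambda\to0^+$ (dominated convergence gives $I(\lambda)\to\int_0^\delta x^{-a/b}dx<\infty$), so a finite $\Phi$ times $I(\lambda)$ would force $Z(\sigma)$ to stay bounded in all three cases, contradicting (i) and (ii). Nothing in your write-up supplies the missing factor $\lambda^{-1}$ needed to reach the rate $\lambda^{(1-a/b)/p-1}$.

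If you retain the $x$-dependent upper limit and expand as above, the problem reduces (up to bounded remainders) to $\frac{1}{b\sigma+1}\int_0^\delta x^{a\sigma}\bigl(\delta^{b\sigma+1}-e(x)^{b\sigma+1}\bigr)dx$, and then your rescaling $x=(b\sigma+1)^{1/p}u$ does yield $A$ in case (i) and the logarithm in case (ii). This is essentially the paper's route: instead of the $y$-scaling, the paper splits the box along the curve $\lambda y=e(x)$ with an auxiliary parameter $\lambda$, integrates $y^{b\sigma}$ directly on the region $\{y\ge e(x)/\lambda\}$ to obtain $\frac{1}{X}\int x^{a\sigma}(\tilde r_2^{\,X}-e(x)^X)\,dx$ with $X=b\sigma+1$, rescales $x=X^{1/p}u$, and finally sends $\lambda\to0$ to squeeze the upper and lower bounds together. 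Your localisation, four-quadrant, and $\varphi\to\varphi(0,0)$ steps are fine and match the paper's Section~4; the gap is solely in the claimed decoupling of the inner integral from $x$.
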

Of course, if $Z_f(\varphi)$ had a pole of order $m$ at $s=-1/b$, 
then 
$
\lim_{\sigma\to-1/b+0}
(b\sigma+1)^{m} \cdot Z_f(\varphi)(\sigma)
$ must be a positive value. 
Noticing that $0<1-\frac{1-a/b}{p}<1$, 
we can see the following from the above theorem
with Lemma~1.1 in the Introduction. 
\begin{corollary}
Under the assumption in Theorem~2.1 
with the condition (\ref{eqn:1.4}) on $\varphi$,
$Z_f(\varphi)$ cannot be meromorphically 
continued to any open neighborhood of $s=-1/b$ in $\C$.
In other words, 
the singularity of $Z_f(\varphi)(s)$ at $s=-1/b$
is different from a pole.
\end{corollary}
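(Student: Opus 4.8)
The plan is to argue by contradiction from the three asymptotic formulas of Theorem~\ref{thm:1.1}, combined with Lemma~\ref{lemma:1.1}. Suppose, contrary to the assertion, that $Z_f(\varphi)$ can be continued meromorphically to some open disc $D$ centered at $s=-1/b$. By Lemma~\ref{lemma:1.1} it is not holomorphic at $-1/b$, so it has a genuine pole there of some finite order $m$ with $m\ge 1$, and on the punctured disc $D\setminus\{-1/b\}$ --- in particular on a real segment $(-1/b,-1/b+\epsilon)$ --- it is represented by a convergent Laurent series
\[
Z_f(\varphi)(s)=\sum_{n\ge -m} c_n\,(bs+1)^n,
\qquad c_{-m}\neq 0 .
\]
Hence, writing $s=\sigma\in\R$ and letting $\sigma\to -1/b+0$, so that $u:=b\sigma+1\to 0+$, for every real exponent $\lambda$ the product $u^{\lambda}\,Z_f(\varphi)(\sigma)$ is asymptotic to $c_{-m}\,u^{\lambda-m}$; therefore it tends to $0$ when $\lambda>m$, to the finite nonzero value $c_{-m}$ when $\lambda=m$, and has modulus tending to $+\infty$ when $\lambda<m$. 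The point to keep in mind is that meromorphy on a full neighborhood of $-1/b$ is exactly what licenses reading off this integer power-type behaviour along the real axis.

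Next I would check that every case of Theorem~\ref{thm:1.1} contradicts this dichotomy, using that $\varphi(0,0)>0$ by \eqref{eqn:1.4} and that $A>0$, so the limits $4A\varphi(0,0)$ and $\tfrac{4}{pq}\varphi(0,0)$ are strictly positive and finite. In case~(i), Theorem~\ref{thm:1.1} gives $u^{\alpha}Z_f(\varphi)(\sigma)\to 4A\varphi(0,0)\in(0,\infty)$ with $\alpha=1-\tfrac{1-a/b}{p}$; since $0<\alpha<1\le m$, the dichotomy forces $|u^{\alpha}Z_f(\varphi)(\sigma)|\to\infty$, a contradiction. In case~(ii), $Z_f(\varphi)(\sigma)\sim\tfrac{4}{pq}\varphi(0,0)\,|\log u|$ as $u\to 0+$, so $u^{m}Z_f(\varphi)(\sigma)\sim\tfrac{4}{pq}\varphi(0,0)\,u^{m}|\log u|\to 0$ because $m\ge 1$, whereas the Laurent expansion gives $u^{m}Z_f(\varphi)(\sigma)\to c_{-m}\neq 0$, a contradiction. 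In case~(iii), $Z_f(\varphi)(\sigma)\to B(\varphi)$, a finite limit, whereas taking $\lambda=0<1\le m$ in the dichotomy gives $|Z_f(\varphi)(\sigma)|\to\infty$, again a contradiction.

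In each case the contradiction shows that $Z_f(\varphi)$ admits no meromorphic continuation to a neighborhood of $-1/b$. Since a pole at $-1/b$ would by definition produce exactly such a continuation, this says that the singularity of $Z_f(\varphi)$ at $s=-1/b$ --- which does exist, by Lemma~\ref{lemma:1.1} --- is not a pole, proving the corollary. The entire substance of the argument is carried by Theorem~\ref{thm:1.1}; there is no genuine obstacle at this stage beyond that theorem, and case~(i), where the exponent $\alpha$ is a non-integer lying in $(0,1)$, is the most transparent witness that the singularity is genuinely non-polar.
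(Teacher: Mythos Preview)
Your proof is correct and follows essentially the same approach as the paper: the paper simply remarks that a pole of order $m$ would force $\lim_{\sigma\to-1/b+0}(b\sigma+1)^m Z_f(\varphi)(\sigma)$ to be a nonzero (in fact positive) constant, notes that $0<1-\frac{1-a/b}{p}<1$, and then invokes Theorem~2.1 together with Lemma~1.1 without spelling out the three cases. Your version writes out the Laurent-series dichotomy and the case-by-case contradictions explicitly, which is exactly what the paper leaves to the reader.
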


If ${\rm Re}(s)>-c_0(f)$, then 
$|f|^s$ can be regarded as a {\it distribution}
by considering the map from
$C_0^{\infty}(U)$ to $\C$ defined by 
\begin{equation*}
\varphi
\longmapsto
\left\langle
|f|^s,\varphi
\right\rangle=
\int_{\R^2}|f|^s\varphi dxdy=Z_f(\varphi)(s).
\end{equation*} 
Furthermore, the equalities (\ref{eqn:2.2}), (\ref{eqn:2.3}) 
in (i), (ii) can be interpreted as 
in the following.   
\begin{equation}\label{eqn:2.5}
\begin{split}
&\lim_{\sigma\to-1/b+0}
(b\sigma+1)^{1-\frac{1-a/b}{p}}|f|^{\sigma}=4A\delta,\\
&\lim_{\sigma\to-1/b+0}
|\log(b\sigma+1)|^{-1}|f|^{\sigma}=\frac{1}{pq}\delta,\\
\end{split}
\end{equation}
where $\delta\in{\mathcal D}'(U)$ is Dirac's delta function. 
The limits in the left-hand sides of (\ref{eqn:2.5}) are 
taken in the topology of ${\mathcal D}'(U)$.
On the other hand, 
the map $B$ from $C_0^{\infty}(U)$ to $\R$, 
defined by 
$\varphi\mapsto B(\varphi)
(=\left\langle B,\varphi \right\rangle)$ from (iii), 
can also be interpreted as a distribution: 
\begin{equation*}
\lim_{\sigma\to-1/b+0} |f|^{\sigma}=B.
\end{equation*}
The continuity of the above map $B$ will be explained 
in Remark~4.1.

\section{Asymptotic limits of associated integrals}

For a set $U$ in $\R^2$, 
let us define the integral of the form:
\begin{equation}\label{eqn:3.1}
\begin{split}
Z_U(\sigma):=\int_{U}
\left|x^ay^b+x^ay^{b-q}e^{-1/|x|^p}\right|^{\sigma}
dxdy
\quad\quad \mbox{ for $\sigma<0$,}
\end{split}
\end{equation}
where $a,b,p,q$ satisfy the conditions in the previous section. 

In this section, we consider the case when 
$U=U_+(r_1,r_2)$ with $r_1,r_2\in(0,1)$, 
where 
$$
U_+(r_1,r_2)=
\{(x,y)\in\R^2:0\leq x\leq r_1, 0\leq y\leq r_2\}
$$ 
and simply denote 
\begin{equation}\label{eqn:3.2}
Z(\sigma):=Z_{U_+(r_1,r_2)}(\sigma).
\end{equation} 
%
Let $e(x)$ be the smooth function defined by 
\begin{equation}\label{eqn:3.3}
e(x):=\exp\left(\frac{-1}{qx^p}\right) 
\quad\quad \mbox{ for $x>0$}
\end{equation}
and $e(0)=0$, 
which frequently appears in the computation below.
Since the function $e$ is monotonously increasing,
we can define the function $\rho:[0,\infty)\to[0,r_1]$ by
\begin{equation}\label{eqn:rho}
\rho(y)=
\begin{cases}
e^{-1}(y)=\left(
\frac{-1}{q\log y}
\right)^{1/p} \quad\quad \mbox{ if $0\leq y <e(r_1)$,} \\
r_1 \quad\quad\quad\quad \mbox{ if $y\geq e(r_1)$.}
\end{cases}
\end{equation}

Since the Newton distance of $f$ equals $b$, 
Theorem~1.2 due to Greenblatt implies that  
the integral $Z(\sigma)$ converges if $\sigma>-1/b$ 
and diverges if $\sigma<-1/b$.
The purpose of this section is 
to compute exact asymptotic limits 
of $Z(\sigma)$ as $\sigma\to-1/b+0$.

\begin{theorem}\label{thm:5.1}
The integral $Z(\sigma)$ satisfies the following.
\begin{enumerate}
\item
If $p>1-a/b$, 
then
\begin{equation*}
\lim_{\sigma\to-1/b+0}
(b\sigma+1)^{1-\frac{1-a/b}{p}}\cdot Z(\sigma)
=A, 
\end{equation*}
where $A$ is as in (\ref{eqn:A}).
\item
If $p=1-a/b$, 
then 
\begin{equation*}
\lim_{\sigma\to-1/b+0}
|\log (b\sigma+1)|^{-1}\cdot Z(\sigma)
=\frac{1}{pq}.
\end{equation*}
\item
If $0<p< 1-a/b$, then the limit of 
$Z(\sigma)$ as 
$\sigma\to-1/b+0$ exists 
and it satisfies 
\begin{equation*}
\begin{split}
&\max\left\{
\frac{L(\lambda)}{(1+\lambda^q)^{1/b}}+
\frac{M(\lambda)}{(1+\lambda^{-q})^{1/b}}
:\lambda>0
\right\} \\
&\quad\quad\leq 
\lim_{\sigma\to-1/b+0}Z(\sigma) \leq 
\min\{L(\lambda)+M(\lambda):\lambda>0\},
\end{split}
\end{equation*}
where $L(\lambda)$, $M(\lambda)$ are 
positive constants depending on $\lambda$ as in 
(\ref{eqn:3.12}), (\ref{eqn:3.38}) in the proof below.
\end{enumerate}
\end{theorem}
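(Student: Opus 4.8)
The plan is to reduce the computation of $Z(\sigma)$ to a one-dimensional integral in $x$ by first integrating out $y$, exploiting the factorization $x^a y^b + x^a y^{b-q}e^{-1/|x|^p} = x^a y^{b-q}(y^q + e^{-1/|x|^p})$. For fixed $x\in(0,r_1]$ write $t = e^{-1/|x|^p}$; then the inner integral $\int_0^{r_2} |x^a y^{b-q}(y^q+t)|^{\sigma}\,dy = x^{a\sigma}\int_0^{r_2} y^{(b-q)\sigma}(y^q+t)^{\sigma}\,dy$. The substitution $y = t^{1/q}u$ (valid since $q$ is even, so $y^q+t>0$) turns this into $x^{a\sigma} t^{((b-q)\sigma+1)/q + \sigma}\int_0^{r_2 t^{-1/q}} u^{(b-q)\sigma}(u^q+1)^{\sigma}\,du = x^{a\sigma} t^{(b\sigma+1)/q}\int_0^{r_2 t^{-1/q}} u^{(b-q)\sigma}(u^q+1)^{\sigma}\,du$. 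The key point is that as $\sigma\to -1/b+0$ the exponent $(b\sigma+1)/q$ of $t$ tends to $0$, so $t^{(b\sigma+1)/q} = \exp(-(b\sigma+1)/(q|x|^p))$ is a slowly-decaying factor that localizes the $x$-integral near $x=0$; this is the source of the non-polar behavior. Recalling the notation $e(x)=\exp(-1/(qx^p))$ and $\rho$ from \eqref{eqn:rho}, we have $t^{(b\sigma+1)/q} = e(x)^{b\sigma+1}$, and one should split the $u$-integral at $u=1$: the piece over $u\in[1,\infty)$ behaves like $(b\sigma+1)^{-1}$ near the critical line while the piece over $u\in(0,1]$ stays bounded, and carefully tracking the cutoff $r_2 t^{-1/q}$ versus $1$ (i.e., $x$ small versus $x\asymp r_1$) produces the stated constants.

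Concretely, after integrating in $y$ one is left with $Z(\sigma) = \int_0^{r_1} x^{a\sigma} e(x)^{b\sigma+1} G(\sigma,x)\,dx$ where $G(\sigma,x) = \int_0^{r_2 e(x)^{-1/q}} u^{(b-q)\sigma}(u^q+1)^{\sigma}\,du$. First I would show $G(\sigma,x)$ is, up to $O(1)$ errors uniform in the relevant range, equal to $\tfrac{1}{q(-b\sigma-1)}\bigl((r_2 e(x)^{-1/q})^{b\sigma+1}-1\bigr)$ plus a convergent $O(1)$ term from the part near $u=0$, because $u^{(b-q)\sigma}(u^q+1)^{\sigma}\sim u^{b\sigma}$ for large $u$ (this uses $(b-q)\sigma+q\sigma = b\sigma < -1$ being only barely false, i.e., $b\sigma+1\to 0^+$). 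Substituting back, the dominant contribution is $\int_0^{r_1} x^{a\sigma}\cdot\tfrac{1}{q(-b\sigma-1)}\bigl(r_2^{b\sigma+1}x^0 - e(x)^{b\sigma+1}\bigr)\,dx$; the leading term as $\sigma\to-1/b$ comes from $\tfrac{r_2^{b\sigma+1}}{q(b\sigma+1)}\int_0^{r_1} x^{a\sigma}(1-(e(x)/r_2^{q})^{b\sigma+1})\,dx$ — note $1-e(x)^{b\sigma+1} = (b\sigma+1)\cdot\tfrac{1}{qx^p}+\cdots$ near $x=0$, which is exactly what cancels the $1/(b\sigma+1)$ and yields a finite limit when the $x$-integral of $x^{-a/b}\cdot x^{-p}$ converges at $0$, i.e., when $p < 1-a/b$.

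The trichotomy then emerges from comparing the exponents $-a/b$ (from $x^{a\sigma}\to x^{-a/b}$) and $-p$ near $x=0$: if $p>1-a/b$ the integrand $x^{-a/b}(1-e(x)^{b\sigma+1})$ behaves like $x^{-a/b}$ which is integrable, so $\int_0^{r_1}x^{a\sigma}(1-e(x)^{b\sigma+1})\,dx\to \int_0^\infty x^{-a/b}(1-e^{-1/(qx^p)})\,dx = A$ after rescaling (and the surviving prefactor is $(b\sigma+1)^{1-(1-a/b)/p}$, obtained by the change of variables $x = (b\sigma+1)^{1/p}\xi$ which is the honest way to extract the right power); if $p=1-a/b$ the integral diverges logarithmically giving the $|\log(b\sigma+1)|$ rate with constant $1/(pq)$; and if $p<1-a/b$ the integral converges to a finite $x$-integral, giving $\lim Z(\sigma) = B:=$ that finite value, with the stated two-sided bounds coming from the fact that one cannot reduce $G(\sigma,x)$ to a single clean closed form but must bracket $(u^q+1)^{\sigma}$ between $\max(1,u^q)^{\sigma}\cdot\text{const}$ — more precisely between $(1+\lambda^{-q})^{\sigma}\max(1,u/\lambda^{?})$-type bounds, which is where the optimization over $\lambda>0$ and the constants $L(\lambda),M(\lambda)$ of \eqref{eqn:3.12} and \eqref{eqn:3.38} enter. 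I expect the main obstacle to be case (iii): establishing that the limit exists at all (monotone convergence gives a limit in $[0,+\infty]$ but finiteness and the explicit sandwich require splitting the region $U_+(r_1,r_2)$ along the curve $y^q = \lambda^q e(x)$ and estimating each piece separately, controlling the interplay between the boundary terms at $y=r_2$ and $x=r_1$), together with verifying uniformity of all $O(1)$ error terms in $\sigma$ as $\sigma\to-1/b+0$ so that the interchange of limit and integral is legitimate — this is where the bulk of the technical work lies, whereas cases (i) and (ii) are comparatively direct once the $y$-integration and the rescaling $x = (b\sigma+1)^{1/p}\xi$ are in place.
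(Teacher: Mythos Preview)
Your approach is sound and will work, but it is organized differently from the paper's. The paper introduces the parameter $\lambda>0$ at the very beginning: it splits $U_+(r_1,r_2)$ along the curve $\lambda y = e(x)$ into $U_1(\lambda)$ (where $y^q$ dominates) and $U_2(\lambda)$ (where $e^{-1/x^p}$ dominates), and on each piece bounds the full integrand by the simpler expressions $x^{a\sigma}y^{b\sigma}$ and $x^{a\sigma}y^{(b-q)\sigma}e^{-\sigma/x^p}$ respectively, obtaining comparison integrals $\tilde Z_1^{(\lambda)},\tilde Z_2^{(\lambda)}$ and the two-sided estimate $(1+\lambda^q)^{\sigma}\tilde Z_1^{(\lambda)}+(1+\lambda^{-q})^{\sigma}\tilde Z_2^{(\lambda)}\le Z(\sigma)\le \tilde Z_1^{(\lambda)}+\tilde Z_2^{(\lambda)}$. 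The asymptotics of $\tilde Z_1^{(\lambda)}$ (after integrating in $y$ and rescaling $x=(b\sigma+1)^{1/p}u$, exactly your rescaling) produce all three regimes; letting $\lambda\to 0$ then squeezes out the precise constants in (i) and (ii), while in (iii) the sandwich inequality itself \emph{is} the statement. Your route --- integrate $y$ out exactly via $y=e(x)\,u$ and work with the resulting one-variable integral in $x$ --- is more direct for (i) and (ii), since it avoids the auxiliary parameter and the final $\lambda\to 0$ squeeze; indeed your leading term $\frac{1}{b\sigma+1}\int_0^{r_1}x^{a\sigma}\bigl(r_2^{b\sigma+1}-e(x)^{b\sigma+1}\bigr)\,dx$ is precisely the paper's $\tilde Z_1^{(\lambda)}$ at $\lambda=1$. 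For (iii), however, you correctly note that one must revert to a $\lambda$-splitting anyway to obtain the stated bounds with $L(\lambda)$, $M(\lambda)$, and there the paper's uniform treatment is more economical. Two minor corrections: on $U_+$ one has $y\ge 0$, so $y^q+t>0$ without any parity hypothesis on $q$ (the assumption ``$q$ even'' belongs to Theorem~2.1, not to Theorem~3.1); and your upper limit of integration should be $r_2/e(x)$ rather than $r_2\,e(x)^{-1/q}$, since $t^{1/q}=e(x)$.
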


\begin{remark}
In \cite{Gre06}, 
Greenblatt shows the
boundedness of $Z(\sigma)$  
near $\sigma=-1/b$ 
in the case of $p=1/(2b)$ and $q=2$, 
which is contained in the above case (iii).
\end{remark}


\subsection{Auxiliary function with a parameter}

Let $\lambda$ be a positive number. 
The set $U_+(r_1,r_2)$ is decomposed as
$U_1(\lambda)\cup U_2(\lambda)$ with
\begin{equation*}
\begin{split}
&U_1(\lambda)=\{(x,y)\in U_+(r_1,r_2): 
\lambda y\geq e(x)\},\\
&U_2(\lambda)=\{(x,y)\in U_+(r_1,r_2):
\lambda y < e(x))\}.
\end{split}
\end{equation*}
The integral $Z(\sigma)$ is expressed as 
\begin{equation}\label{eqn:3.5}
Z(\sigma)=
Z_1^{(\lambda)}(\sigma)
+
Z_2^{(\lambda)}(\sigma),
\end{equation}
where 
\begin{equation}\label{eqn:3.6}
\begin{split}
Z_j^{(\lambda)}(\sigma)
=\int_{U_j(\lambda)}
(x^a y^b +x^a y^{b-q} e^{-1/x^p})^{\sigma}dxdy 
\quad\quad \mbox{for $j=1,2$.}
\end{split}
\end{equation}
Note that 
\begin{equation}\label{eqn:3.7}
\begin{split}
&x^a y^b(1 + y^{-q} e^{-1/x^p})\leq
(1+\lambda^q) x^a y^b \quad \mbox{for $(x,y)\in U_1(\lambda)$,} \\
&x^a y^{b-q} e^{-1/x^p}(y^q e^{1/x^p}+1)\leq 
(1+\lambda^{-q}) x^a y^{b-q} e^{-1/x^p} 
\quad \mbox{for $(x,y)\in U_2(\lambda)$.}
\end{split}
\end{equation}
Thus each $Z_j^{(\lambda)}(\sigma)$ can be estimated 
by using the following integrals.
\begin{eqnarray}
&&\tilde{Z}_1^{(\lambda)}(\sigma)=
\int_{U_1(\lambda)}x^{a\sigma}y^{b\sigma}dxdy,
\label{eqn:3.8}\\
&&\tilde{Z}_2^{(\lambda)}(\sigma)=
\int_{U_2(\lambda)}
x^{a\sigma}y^{(b-q)\sigma}e^{-\sigma/x^p}dxdy.
\label{eqn:3.9}
\end{eqnarray}
Indeed, 
${Z}_1^{(\lambda)}(\sigma)$ and 
${Z}_2^{(\lambda)}(\sigma)$ are convergent 
if and only if  
so are 
$\tilde{Z}_1^{(\lambda)}(\sigma)$ and 
$\tilde{Z}_2^{(\lambda)}(\sigma)$.
Moreover, they satisfy 
\begin{equation}\label{eqn:3.10}
\begin{split}
&(1+\lambda^q)^{\sigma}
\tilde{Z}_1^{(\lambda)}(\sigma)< 
Z_1^{(\lambda)}(\sigma)< 
\tilde{Z}_1^{(\lambda)}(\sigma),\\
&(1+\lambda^{-q})^{\sigma}
\tilde{Z}_2^{(\lambda)}(\sigma)< 
Z_2^{(\lambda)}(\sigma)
< \tilde{Z}_2^{(\lambda)}(\sigma),
\end{split}
\end{equation}
for $\sigma<0$.
In order to prove Theorem~3.1, 
let us investigate the behaviors of 
$\tilde{Z}_1^{(\lambda)}(\sigma)$ and 
$\tilde{Z}_2^{(\lambda)}(\sigma)$ 
as $\sigma\to-1/b+0$.

\subsection{Preliminary lemma}

For $\alpha>0$, 
let $\psi_{\alpha}$ be the smooth function 
defined by 
\begin{equation}\label{eqn:3.11}
\psi_{\alpha}(x)
:=\frac{1-\alpha^x}{x} \quad \mbox{ for $x>0$.} 
\end{equation} 
The following properties of $\psi_{\alpha}$ play important roles 
in the computation below.

\begin{lemma} 
The function $\psi_{\alpha}$ satisfies the following 
properties.
\begin{enumerate}
\item There exist positive constants $C_1, C_2$ such that  
\begin{equation*}
-\log\alpha-C_1x<\psi_{\alpha}(x)
<-\log\alpha -C_2x
\end{equation*}
for $x\in(0,1)$.
In particular, $\lim_{x\to +0}\psi_{\alpha}(x)=-\log\alpha$.
\item If $\alpha\in(0,1)$, 
then $\lim_{x\to \infty}\psi_{\alpha}(x)=0$.
\item If $\alpha\in(0,1)$, 
then $\psi_{\alpha}$ is monotonously decreasing, 
in particular, 
$0<\psi_{\alpha}(x)<-\log \alpha$ for $x>0$.
\end{enumerate}
\end{lemma}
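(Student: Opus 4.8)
The plan is to substitute $\alpha^x=e^{\ell x}$ with $\ell:=\log\alpha$, so that $\psi_\alpha(x)=(1-e^{\ell x})/x$, and to reduce all three assertions to elementary one-variable calculus; we may assume $\alpha\neq 1$, since $\psi_1\equiv 0$ and the remaining claims are then trivial or vacuous.

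For (i), the key point is to pass to a second-order expansion rather than a crude first-order estimate. I would start from Taylor's formula with integral remainder,
\[
1-e^{\ell x}+\ell x=-\ell^2\int_0^x(x-t)e^{\ell t}\,dt,
\]
which yields
\[
\frac{\psi_\alpha(x)+\log\alpha}{x}=\frac{1-e^{\ell x}+\ell x}{x^2}
=-\frac{\ell^2}{x^2}\int_0^x(x-t)e^{\ell t}\,dt .
\]
For $t\in(0,x)\subset(0,1)$ the factor $e^{\ell t}$ lies strictly between the positive constants $m:=\min\{1,e^\ell\}$ and $M:=\max\{1,e^\ell\}$, while $\int_0^x(x-t)\,dt=x^2/2$; hence the right-hand side lies strictly between the two negative numbers $-\ell^2 M/2$ and $-\ell^2 m/2$, both depending on $\alpha$ alone. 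Multiplying through by $x>0$ gives the asserted bound $-\log\alpha-C_1x<\psi_\alpha(x)<-\log\alpha-C_2x$ on $(0,1)$ with $C_1=\ell^2M/2$ and $C_2=\ell^2m/2$, and letting $x\to+0$ gives $\psi_\alpha(x)\to-\log\alpha$.

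Part (ii) is immediate: if $\alpha\in(0,1)$ then $0<1-\alpha^x<1$ for $x>0$, so $0<\psi_\alpha(x)<1/x\to 0$ as $x\to\infty$. For (iii), with $\alpha\in(0,1)$ and hence $\ell<0$, I would differentiate,
\[
\psi_\alpha'(x)=\frac{e^{\ell x}(1-\ell x)-1}{x^2},
\]
and reduce the sign question to the one-variable inequality $k(u):=e^u(1-u)-1<0$ for $u<0$; this holds since $k(0)=0$ and $k'(u)=-ue^u>0$ on $(-\infty,0)$, so $k$ is strictly increasing and therefore negative there. Putting $u=\ell x<0$ shows $\psi_\alpha'<0$ on $(0,\infty)$, so $\psi_\alpha$ is strictly decreasing; combining this with the boundary limits $\psi_\alpha(0+)=-\log\alpha$ from (i) and $\psi_\alpha(+\infty)=0$ from (ii) forces $0<\psi_\alpha(x)<-\log\alpha$ for every $x>0$.

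Since every step is elementary there is no serious obstacle here; the only place needing a little care is (i), where a mean-value estimate alone only gives $\psi_\alpha(x)<-\log\alpha$ with no rate of convergence, so one really must go to the integral-remainder form to produce the linear correction with constants $C_1,C_2$ that are positive and independent of $x$.
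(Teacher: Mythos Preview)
Your proof is correct and aligns with the paper's approach: the paper's own proof consists of the single sentence ``The above properties of $\psi_{\alpha}$ can be easily seen by using Taylor's formula,'' and your argument is precisely a careful execution of that hint, using the integral remainder form of Taylor for (i) and direct elementary calculus for (ii) and (iii). There is nothing to add or correct.
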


\begin{proof} 
The above properties of $\psi_{\alpha}$ can be 
easily seen by using Taylor's formula.
\end{proof}

\begin{remark}
In the computation below, 
the function $1-e(x)$ often appears. 
This function can be expressed by using $\psi_{\alpha}$
with $\alpha=\exp(-q^{-1})$ as follows. 
\begin{equation*}
\begin{split}
1-e(x)=
\frac{1-\exp(-q^{-1}x^{-p})}{x^{-p}}\cdot x^{-p} 
=\psi_{\alpha}(x^{-p})x^{-p}.
\end{split}
\end{equation*}
From Lemma~3.3, we can see that 
\begin{equation}\label{eqn:e2}
q^{-1}x^{-p}-C_1x^{-2p}\leq
1-e(x)\leq
q^{-1}x^{-p}-C_2x^{-2p} \mbox{ for $x\geq 1$,}
\end{equation}
where $C_1,C_2$ are positive constants. 
\end{remark}

\subsection{Asymptotics of $\tilde{Z}_1^{(\lambda)}(\sigma)$}

Let us investigate 
the behavior of $\tilde{Z}_1^{(\lambda)}(\sigma)$ 
which is essentially important.

\begin{lemma}
\begin{enumerate}
\item
If $p>1-a/b$, 
then
\begin{equation*}
\lim_{\sigma\to-1/b+0}(b\sigma+1)^{1-\frac{1-a/b}{p}}\cdot 
\tilde{Z}_1^{(\lambda)}(\sigma)=A,
\end{equation*}
where $A$ is as in (\ref{eqn:A}).
\item
If $p=1-a/b$, 
then
\begin{equation*}
\lim_{\sigma\to-1/b+0}
|\log (b\sigma+1)|^{-1}\cdot \tilde{Z}_1^{(\lambda)}(\sigma)
=\frac{1}{pq}.
\end{equation*} 
\item
If $0<p<1-a/b$, then
$\lim_{\sigma\to-1/b+0}
\tilde{Z}_1^{(\lambda)}(\sigma)=L(\lambda)$ with
\begin{equation}\label{eqn:3.12}
L(\lambda)=
\frac{\rho(\lambda r_2)^{1-a/b}}{1-a/b}
\cdot\log(\lambda r_2)
+
\frac{\rho(\lambda r_2)^{1-a/b-p}}{q(1-a/b-p)}.
\end{equation}
\end{enumerate}
\end{lemma}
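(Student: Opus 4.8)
The plan is to carry out the $y$-integration in \eqref{eqn:3.8}, isolate the $0/0$ indeterminacy produced by $b\sigma+1\to 0$, and then analyse the resulting flat remainder integral separately in the three regimes. Write $\beta:=b\sigma+1$ (so $\beta\to 0+$ as $\sigma\to-1/b+0$) and $c:=\rho(\lambda r_2)\in(0,r_1]$. For fixed $x$ the slice of $U_1(\lambda)$ in the $y$-direction is $[e(x)/\lambda,\,r_2]$ when $x\le c$ and is empty otherwise, so, using $a\sigma+1\to 1-a/b>0$ and then adding and subtracting $\lambda^{-\beta}\int_0^{c}x^{a\sigma}\,dx$,
\[
\tilde Z_1^{(\lambda)}(\sigma)=\frac1\beta\int_0^{c}x^{a\sigma}\bigl(r_2^{\beta}-\lambda^{-\beta}e(x)^{\beta}\bigr)\,dx=P(\sigma)+F(\sigma),
\]
\[
P(\sigma)=\frac{r_2^{\beta}-\lambda^{-\beta}}{\beta}\cdot\frac{c^{a\sigma+1}}{a\sigma+1},
\qquad
F(\sigma)=\lambda^{-\beta}\int_0^{c}x^{a\sigma}\,\frac{1-e(x)^{\beta}}{\beta}\,dx,
\]
with $e(x)^{\beta}=\exp(-\beta/(qx^{p}))$. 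Since $(r_2^{\beta}-1)/\beta\to\log r_2$ and $(\lambda^{-\beta}-1)/\beta\to-\log\lambda$, one has $P(\sigma)\to\frac{\rho(\lambda r_2)^{1-a/b}}{1-a/b}\log(\lambda r_2)$ in all three cases, which is the first summand of $L(\lambda)$ in \eqref{eqn:3.12}; everything thus reduces to the behaviour of $F(\sigma)$. (Positivity of the limit in the case $0<p<1-a/b$, asserted in Theorem~3.1, will follow because $\sigma\mapsto\tilde Z_1^{(\lambda)}(\sigma)$ increases as $\sigma\to-1/b+0$, the integrand $x^{a\sigma}y^{b\sigma}$ being increasing in $-\sigma$ since its base lies in $[0,1)$.)

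Case (iii) I would settle directly by dominated convergence: on $(0,c)$ one has $(1-e(x)^{\beta})/\beta\to 1/(qx^{p})$ pointwise, together with the bound $0<(1-e(x)^{\beta})/\beta\le 1/(qx^{p})$ (from $1-e^{-t}\le t$) and $x^{a\sigma}\le x^{-a/b}$; the dominating function $x^{-a/b-p}/q$ is integrable on $(0,c)$ precisely because $p<1-a/b$, so $F(\sigma)\to\frac1q\int_0^{c}x^{-a/b-p}\,dx=\frac{\rho(\lambda r_2)^{1-a/b-p}}{q(1-a/b-p)}$, the second summand of $L(\lambda)$, and the claimed limit follows. For cases (i) and (ii) I would substitute $t=\beta/(qx^{p})$ in $F(\sigma)$: with $\mu:=(a\sigma+1)/p$ (so $\mu\to(1-a/b)/p=:\kappa$) and $t_0:=\beta/(qc^{p})\to 0$,
\[
F(\sigma)=\lambda^{-\beta}\,\frac{(\beta/q)^{\mu}}{p\,\beta}\int_{t_0}^{\infty}t^{-\mu-1}\bigl(1-e^{-t}\bigr)\,dt .
\]

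In case (i), $\kappa<1$, so the limit integral $\int_0^{\infty}t^{-\kappa-1}(1-e^{-t})\,dt$ converges, and a uniform dominating function (available once $\mu<1$) lets one pass to the limit inside; as $(\beta/q)^{\mu}/(p\beta)=q^{-\mu}\beta^{\mu-1}/p$ and $\beta^{\mu-\kappa}=\exp(\tfrac{a}{bp}\beta\log\beta)\to 1$, this gives $\beta^{\,1-\kappa}F(\sigma)\to\tfrac{q^{-\kappa}}{p}\int_0^{\infty}t^{-\kappa-1}(1-e^{-t})\,dt$, and running the substitution $t=1/(qx^{p})$ in \eqref{eqn:A} shows the right-hand side equals $A$. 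Since $1-\kappa=1-\tfrac{1-a/b}{p}\in(0,1)$, also $\beta^{\,1-\kappa}P(\sigma)\to 0$, so $\beta^{\,1-\kappa}\tilde Z_1^{(\lambda)}(\sigma)\to A$, which is (i). In case (ii), $\kappa=1$; integration by parts gives
\[
\int_{t_0}^{\infty}t^{-\mu-1}(1-e^{-t})\,dt=\frac{t_0^{-\mu}(1-e^{-t_0})}{\mu}+\frac1\mu\int_{t_0}^{\infty}t^{-\mu}e^{-t}\,dt,
\]
the first term tending to $1$, and, writing $\int_{t_0}^{\infty}t^{-\mu}e^{-t}\,dt=\int_{t_0}^{1}t^{-\mu}\,dt+O(1)$, the second equals $\tfrac1\mu|\log\beta|(1+o(1))$. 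Since $(\beta/q)^{\mu}/(p\beta)=(\beta/q)^{\mu-1}/(pq)\to 1/(pq)$, one gets $F(\sigma)=\tfrac1{pq}|\log\beta|(1+o(1))$, which dominates the bounded $P(\sigma)$, giving $|\log(b\sigma+1)|^{-1}\tilde Z_1^{(\lambda)}(\sigma)\to\tfrac1{pq}$, that is, (ii).

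The main obstacle is the estimate $\int_{t_0}^{1}t^{-\mu}\,dt=|\log\beta|(1+o(1))$ used in case (ii): there $\mu=1+\tfrac{a}{bp}\beta$ and $t_0\asymp\beta$ both reach their critical values at rate $\asymp\beta$, so $\int_{t_0}^{1}t^{-\mu}\,dt=\frac{1-t_0^{1-\mu}}{1-\mu}$ is a genuine $0/0$ from which the logarithm must be extracted with the exact constant. The point is that $(1-\mu)\log t_0=-\tfrac{a}{bp}\beta(\log\beta+O(1))\to 0$ while $\beta(\log\beta)^{2}\to 0$, which forces $t_0^{1-\mu}=1+(1-\mu)\log t_0+o((1-\mu)\log t_0)$ and hence $\frac{1-t_0^{1-\mu}}{1-\mu}=-\log t_0\,(1+o(1))=|\log\beta|(1+o(1))$; the subcase $a=0$ is the degenerate instance $\mu\equiv 1$, where $\int_{t_0}^{1}t^{-1}\,dt=-\log t_0$ directly. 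Replacing $e^{-t}$ by $1$ in the leftover pieces costs only $O(1)$ (by $|1-e^{-t}|\le t$), which is negligible against $|\log\beta|$, so the constant $1/(pq)$ is unaffected.
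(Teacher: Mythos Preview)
Your argument is correct. The overall architecture coincides with the paper's: perform the $y$-integration, split off the bounded ``constant'' piece (your $P(\sigma)$ is the paper's $J_2$, and after rescaling essentially its $G_3$), and analyse the remaining flat piece $F(\sigma)$ (the paper's $J_1$). Case~(iii) is handled identically in both, by dominated convergence with majorant $q^{-1}x^{-a/b-p}$.

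The genuine difference lies in cases~(i) and~(ii). The paper rescales by $x=X^{1/p}u$ (your $\beta=X$), keeping the integral in the form $\int u^{a\sigma}(1-e(u))\,du$ and splitting at $u=1$ into $G_1$ and $G_2$; the $|\log X|$ in case~(ii) is then extracted from $G_2$ via the two-sided bounds on $1-e(u)$ from Remark~3.4 together with the pointwise estimate $u^{(a/b)X}\le 1+Du^{-p}\log u$. You push the substitution one step further to $t=\beta/(qx^{p})$ (equivalently $t=1/(qu^{p})$), which recasts $F(\sigma)$ as a Mellin-type integral $\int t^{-\mu-1}(1-e^{-t})\,dt$; case~(i) then follows by a single dominated-convergence argument and the identification with $A$ via the reverse substitution, while case~(ii) is handled by integration by parts to expose the incomplete Gamma integral $\int_{t_0}^{1}t^{-\mu}\,dt$, whose $0/0$ limit you resolve using $\beta(\log\beta)^{2}\to 0$. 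Your route packages the analysis more compactly and makes the connection to $A$ transparent; the paper's route avoids the integration by parts and stays closer to elementary pointwise bounds, at the cost of the extra $G_1/G_2$ split. Both are complete.
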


\begin{proof}
In the proof, we introduce the variable: 
\begin{equation*}
X=b\sigma+1,
\end{equation*}
which is convenient for many kinds of limit processes later. 
Note that 
$$\sigma\to -1/b+0 \Longleftrightarrow X\to +0.$$

Now, applying an iterated integral to (\ref{eqn:3.8}), 
we have
\begin{equation}\label{eqn:3.13}
\tilde{Z}_1^{(\lambda)}\left(\sigma\right)
=\frac{1}{\lambda^X}
\frac{1}{X}\int_0^{\rho(\tilde{r}_2)} 
x^{a\sigma} (\tilde{r}_2^X-e(x)^X)dx,
\end{equation}
where $\tilde{r}_2:=\lambda r_2$.



\vspace{.5 em}

{\bf [The case (i) : $p>1-a/b$.]} \quad 

\vspace{.2 em}

Changing the integral variable in (\ref{eqn:3.13}) by  
\begin{equation*}
x=X^{1/p} u
\Longleftrightarrow 
u=X^{-1/p}x, \quad (dx=X^{1/p}du), 
\end{equation*}
we have 
\begin{equation}\label{eqn:3.14}
\tilde{Z}_1^{(\lambda)}(\sigma)=
\frac{1}{\lambda^X}
\frac{1}{X^{1-\frac{1+a\sigma}{p}}}
\int_0^{\rho(\tilde{r}_2)/X^{1/p}} u^{a\sigma}(\tilde{r}_2^X-e(u))du.
\end{equation}
We decompose the integral in (\ref{eqn:3.14}) 
as $G_1(\sigma)+G_2(\sigma)+G_3(\sigma)$
with 
\begin{eqnarray}
&&G_1(\sigma)=
\int_0^{1} u^{a\sigma}(\tilde{r}_2^X-e(u))du, 
\nonumber\\
&&G_2(\sigma)=
\int_1^{\rho(\tilde{r}_2)/X^{1/p}} 
u^{a\sigma}(1-e(u))du,
\label{eqn:3.15} \\
&&G_3(\sigma)=
(\tilde{r}_2^X-1)
\int_1^{\rho(\tilde{r}_2)/X^{1/p}} 
u^{a\sigma}du.
\nonumber
\end{eqnarray}

\vspace{.2 em}
\underline{The limit of $G_1(\sigma)$.}

\vspace{.2 em}

Since $\tilde{r}_2^X\leq \max\{1,\lambda\}$, 
the integrand can be estimated as 
\begin{equation*}
u^{a\sigma}(\tilde{r}_2^X-e(u))< u^{-a/b} \max\{1,\lambda\}
\end{equation*}
for $(X,u)\in (0,1]\times(0,1]$.
Since $-a/b>-1$, 
the Lebesgue convergence theorem implies 
\begin{equation}\label{eqn:3.16}
\lim_{\sigma\to-1/b+0} G_1(\sigma)
=\int_0^{1}u^{-a/b}(1-e(u))du.
\end{equation}

\vspace{.2 em}
\underline{The limit of $G_2(\sigma)$.}

\vspace{.2 em}

Let $\epsilon_0:=
a/b+p-1>0$, then 
\begin{equation}\label{eqn:3.20}
a\sigma-p+1=\frac{a}{b}(b\sigma+1)-\frac{a}{b}-p+1
=\frac{a}{b}X-\epsilon_0.
\end{equation}
Since $u<\rho(\tilde{r}_2)/X^{1/p}$ 
($\Leftrightarrow X<\rho(\tilde{r}_2)^p u^{-p}$)
and $e^{x}\leq 1+(e-1)x$ for $x\in(0,1)$, 
there exists $\delta>0$ such that 
\begin{equation}\label{eqn:3.23}
\begin{split}
1<u^{\frac{a}{b}X}=
\exp\left(\frac{a}{b}X\cdot \log u\right)
< 1+ D u^{-p}\log u,
\end{split}
\end{equation}
for 
$(u,X)\in [1,\rho(\tilde{r}_2)/X^{1/p})\times(0,\delta)$,
where $D:=(e-1)\frac{a}{b}\rho(\tilde{r}_2)^p$.
From (\ref{eqn:3.20}), (\ref{eqn:e2}), (\ref{eqn:3.23}), 
we have
\begin{equation*}
\begin{split}
u^{a\sigma}(1-e(u))
&< q^{-1}u^{a\sigma-p}
= q^{-1}u^{\frac{a}{b}X-1-\epsilon_0} \\
&
< q^{-1}(1+Du^{-p}\log u)u^{-1-\epsilon_0}
\leq Cu^{-1-\epsilon_0}
\end{split}
\end{equation*}
for $(u,X)\in [1,\rho(\tilde{r}_2)/X^{1/p})\times(0,\delta)$,
where $C>0$ is a constant independent of $u$ and $\sigma$.
Thus, the Lebesgue convergence theorem implies  
\begin{equation}\label{eqn:3.17}
\begin{split}
\lim_{\sigma\to-1/b+0} 
G_2(\sigma)
=\int_1^{\infty}u^{-a/b}(1-e(u))du.
\end{split}
\end{equation}

\vspace{.2 em}
\underline{The limit of $G_3(\sigma)$.}

\vspace{.2 em}

Since $1-\tilde{r}_2^X=X\psi_{\tilde{r}_2}(X)$, 
$G_3(\sigma)$ can be computed as follows.
\begin{equation*}
\begin{split}
G_3(\sigma) 
=X^{1-\frac{a\sigma+1}{p}}
\psi_{\tilde{r}_2}(X)\cdot
\frac{\rho(\tilde{r}_2)^{a\sigma+1}-X^{(a\sigma+1)/p}}{a\sigma+1}.
\end{split}
\end{equation*}
From Lemma~3.3 (i), 
we can see the following. 
\begin{equation}\label{eqn:3.18}
\begin{split}
\lim_{\sigma\to -1/b} X^{-1+\frac{a\sigma+1}{p}}
G_3(\sigma)=
(-\log \tilde{r}_2) \cdot 
\frac{\rho(\tilde{r}_2)^{1-a/b}}{1-a/b}. 
\end{split}
\end{equation}

\vspace{.2 em}

Therefore, (\ref{eqn:3.16}), (\ref{eqn:3.17}), (\ref{eqn:3.18}) 
imply  
\begin{equation*}
\begin{split}
\lim_{\sigma\to-1/b+0} 
X^{1-\frac{1-a/b}{p}}
\tilde{Z}_1^{(\lambda)}(\sigma) 
=
\lim_{\sigma\to-1/b+0} G_1(\sigma)+
\lim_{\sigma\to-1/b+0}G_2(\sigma)=A.
\end{split}
\end{equation*}
Here we used the fact: 
$\lim_{X\to +0} X^{cX}=1$ ($c\in\R$).

\vspace{.5 em}
{\bf [The case (ii) : $p=1-a/b$.]} \quad 

Since
the equalities (\ref{eqn:3.16}) and (\ref{eqn:3.18})
always hold for any $p>0$,   
it suffices to consider the behavior of $G_2(\sigma)$
in the case of $p=1-a/b$.

\vspace{.2 em}

\underline{The limit of $G_2(\sigma)$.}

\vspace{.2 em}

Since $\epsilon_0=0$ in (\ref{eqn:3.20}),
$\a\sigma=p-1+\frac{a}{b}X$ holds. 
Thus, the estimates
(\ref{eqn:e2}), (\ref{eqn:3.23}) imply that
there exist $C_1, C_2,\delta>0$ such that 
\begin{equation}\label{eqn:e3}
q^{-1}u^{-1}-C_1 u^{-p-1}<
u^{a\sigma}(1-e(u))<
q^{-1}u^{-1}+C_2 u^{-p-1}\log u,
\end{equation}
for $(u,X)\in 
[1,\rho(\tilde{r}_2)/X^{1/p})\times(0,\delta)$.


Now, we rewrite (\ref{eqn:3.15}) as follows.
\begin{equation*}
\begin{split}
G_2(\sigma)&=
q^{-1}\int_1^{\rho(\tilde{r}_2)/X^{1/p}} \frac{1}{u}du-
\int_1^{\rho(\tilde{r}_2)/X^{1/p}} 
\left(\frac{q^{-1}}{u}-u^{a\sigma}(1-e(u))\right)du \\
&=:H_1(\sigma)-H_2(\sigma).
\end{split}
\end{equation*}
A direct computation gives 
\begin{equation}\label{eqn:3.24}
H_1(\sigma)=q^{-1}(\log\rho(\tilde{r}_2)-p^{-1}\log X).
\end{equation}
From (\ref{eqn:e3}), 
there exist $\epsilon, C>0$ 
such that 
\begin{equation}\label{eqn:3.25}
|H_2(\sigma)|
\leq C \int_1^{\rho(\tilde{r}_2)/X^{1/p}} 
u^{-p+\epsilon-1}du
\leq \frac{C}{p-\epsilon}.
\end{equation}
From 
(\ref{eqn:3.16}), (\ref{eqn:3.18}), 
(\ref{eqn:3.24}), (\ref{eqn:3.25}),
we have 
\begin{equation*}
\begin{split}
&\lim_{\sigma\to -1/b}
|\log X|^{-1} \tilde{Z}_1^{(\lambda)}(\sigma)
=
\lim_{\sigma\to -1/b}
|\log X|^{-1} G_2(\sigma)\\
&\quad\quad
=
\lim_{\sigma\to -1/b}
|\log X|^{-1} H_1(\sigma)=\frac{1}{pq}.
\end{split}
\end{equation*}

 \vspace{.5 em}
{\bf [The case (iii) : $0<p<1-a/b$.]} \quad 

\vspace{.2 em}

From (\ref{eqn:3.13}), 
$\tilde{Z}_1^{(\lambda)}(\sigma)$ 
can be decomposed as 
$J_1(\sigma)+J_2(\sigma)$ with 

\begin{equation*}
\begin{split}
&J_1(\sigma)=
\frac{1}{\lambda^X}
\int_0^{\rho(\tilde{r}_2)} 
x^{a\sigma} \frac{1-e(x)^X}{X}dx, \\
&J_2(\sigma)=
\frac{1}{\lambda^X}\cdot
\frac{\tilde{r}_2^X-1}{X}
\int_0^{\rho(\tilde{r}_2)} 
x^{a\sigma}dx.
\end{split}
\end{equation*}

\vspace{.2 em}

\underline{The limit of $J_1(\sigma)$.}

\vspace{.2 em}

Using the function $\psi_{\alpha}$ with
$\alpha:=\exp(-q^{-1})$ and Lemma~3.3 (i), 
we have
\begin{equation}\label{eqn:3.26}
\begin{split}
\frac{1-e(x)^{X}}{X} 
=\frac{\psi_{\alpha}(X x^{-p})}{x^p} \longrightarrow 
q^{-1}x^{-p}
\quad \mbox{ as $\sigma\to-1/b+0$.}
\end{split}
\end{equation}
On the other hand, 
Lemma~3.3 (iii) implies  
\begin{equation*}
\left|
x^{a\sigma}\cdot\frac{1-e(x)^{X}}{X}
\right|
\leq C x^{-a/b-p}
\end{equation*}
for $(x,X)\in(0,\rho(\tilde{r}_2)]\times(0,1)$, 
where $C>0$ is a constant depending only 
on $\tilde{r}_2$.
Since $-a/b-p>-1$, 
the Lebesgue convergence theorem 
implies
\begin{equation}\label{eqn:3.27}
\begin{split}
\lim_{\sigma\to -1/b+0} 
J_1(\sigma)=
\frac{1}{q}
\int_0^{\rho(\tilde{r}_2)} x^{-a/b-p}dx
=
\frac{\rho(\tilde{r}_2)^{1-a/b-p}}{q(1-a/b-p)}.
\end{split}
\end{equation}


\vspace{.2 em}

\underline{The limit of $J_2(\sigma)$.}

\vspace{.2 em}

A direct computation gives
$$
J_2(\sigma)=
-\frac{1}{\lambda^X}\cdot
\psi_{\tilde{r}_2}(X)\cdot
\frac{\rho(\tilde{r}_2)^{a\sigma+1}}{a\sigma+1}.
$$
Therefore we have 
\begin{equation}\label{eqn:3.28}
\lim_{\sigma\to -1/b+0} 
J_2(\sigma)=
\frac{\rho(\tilde{r}_2)^{1-a/b}}{1-a/b}
\cdot\log(\tilde{r}_2)
\end{equation}

From (\ref{eqn:3.27}), (\ref{eqn:3.28}), 
we obtain the limit 
of $\tilde{Z}_1^{(\lambda)}(\sigma)$ in (iii).

\end{proof}

\subsection{Asymptotics of $\tilde{Z}_2^{(\lambda)}(\sigma)$}

The behavior of $\tilde{Z}_2^{(\lambda)}(\sigma)$ 
can be easily seen by 
a direct computation. 

\begin{lemma}
$\lim_{\sigma\to -1/b}\tilde{Z}_2^{(\lambda)}(\sigma)=M(\lambda)$ 
with
\begin{equation}\label{eqn:3.38}
\begin{split}
M(\lambda)=
\frac{b^2}{q(b-a)}\cdot \frac{1}{\lambda^{q/b}}\cdot 
\rho(\lambda r_2)^{1-a/b} 
+\frac{b}{q}r_2^{q/b}\int_{\rho(\lambda r_2)}^{r_1} 
x^{-a/b}\exp(1/(bx^p))dx.
\end{split}
\end{equation}
\end{lemma}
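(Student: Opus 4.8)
The plan is to evaluate the iterated integral in \eqref{eqn:3.9} explicitly and then pass to the limit $\sigma\to-1/b$ under the resulting one-dimensional integral. First I would describe the $x$-slices of $U_2(\lambda)$. Since $e$ is monotonously increasing with $e(\rho(\lambda r_2))=\lambda r_2$ when $\lambda r_2<e(r_1)$ (and $\rho(\lambda r_2)=r_1$ otherwise), for fixed $x\in(0,r_1]$ the conditions $0\le y\le r_2$ and $\lambda y<e(x)$ mean that $y$ runs over $[0,Y(x))$ with $Y(x)=\min\{r_2,e(x)/\lambda\}$, and $Y(x)=e(x)/\lambda$ precisely when $x\le\rho(\lambda r_2)$. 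Writing $m:=(b-q)\sigma+1$ and noting $m\to q/b>0$ as $\sigma\to-1/b$, so that $\int_0^{Y}y^{(b-q)\sigma}dy=Y^{m}/m$ is legitimate for $\sigma$ near $-1/b$ (this also gives convergence of $\tilde Z_2^{(\lambda)}(\sigma)$ there), I obtain
\begin{equation*}
\tilde{Z}_2^{(\lambda)}(\sigma)
=\frac{1}{m}\left[
\frac{1}{\lambda^{m}}\int_0^{\rho(\lambda r_2)}x^{a\sigma}e^{-\sigma/x^p}e(x)^{m}dx
+r_2^{m}\int_{\rho(\lambda r_2)}^{r_1}x^{a\sigma}e^{-\sigma/x^p}dx
\right].
\end{equation*}

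The key observation is the simplification of the exponential factor in the first integral. Since $e(x)^{m}=\exp(-m/(qx^p))$, the exponential part of its integrand is $\exp\!\left(-(\sigma+m/q)/x^p\right)$, and a short computation gives $\sigma+m/q=(b\sigma+1)/q=X/q$ with $X=b\sigma+1$; hence this factor equals $\exp(-X/(qx^p))$, which is bounded by $1$ for $X>0$ and tends to $1$ as $X\to+0$ for every fixed $x>0$. Combined with $x^{a\sigma}\le x^{-a/b}$ on $(0,1]$ for $X>0$, the integrand of the first integral is dominated by the integrable function $x^{-a/b}$ on $(0,\rho(\lambda r_2)]$, so dominated convergence gives
\begin{equation*}
\frac{1}{\lambda^{m}}\int_0^{\rho(\lambda r_2)}x^{a\sigma}e^{-\sigma/x^p}e(x)^{m}dx
\longrightarrow
\frac{1}{\lambda^{q/b}}\int_0^{\rho(\lambda r_2)}x^{-a/b}dx
=\frac{1}{\lambda^{q/b}}\cdot\frac{\rho(\lambda r_2)^{1-a/b}}{1-a/b}.
\end{equation*}
For the second integral, $[\rho(\lambda r_2),r_1]$ is compact and bounded away from $0$, so $x^{a\sigma}e^{-\sigma/x^p}r_2^{m}\to x^{-a/b}e^{1/(bx^p)}r_2^{q/b}$ uniformly there, whence that integral tends to $r_2^{q/b}\int_{\rho(\lambda r_2)}^{r_1}x^{-a/b}e^{1/(bx^p)}dx$.

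Finally I would assemble the pieces: dividing by $m\to q/b$ converts the first contribution into $\frac{b}{q}\cdot\frac{1}{\lambda^{q/b}}\cdot\frac{\rho(\lambda r_2)^{1-a/b}}{1-a/b}=\frac{b^{2}}{q(b-a)}\cdot\frac{1}{\lambda^{q/b}}\rho(\lambda r_2)^{1-a/b}$ and the second into $\frac{b}{q}r_2^{q/b}\int_{\rho(\lambda r_2)}^{r_1}x^{-a/b}\exp(1/(bx^p))dx$, which is exactly \eqref{eqn:3.38}; the degenerate case $\lambda r_2\ge e(r_1)$, where $\rho(\lambda r_2)=r_1$ and the second integral is empty, is subsumed by the same formula. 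There is no serious obstacle: the computation is elementary, and the only points needing attention are the correct description of the $x$-slices of $U_2(\lambda)$, the cancellation $\sigma+m/q=X/q$ that renders the exponential factor harmless in the limit, and the routine justification of interchanging limit and integral near $x=0$.
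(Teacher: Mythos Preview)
Your proof is correct and follows essentially the same route as the paper: decompose $U_2(\lambda)$ according to whether $x\le\rho(\lambda r_2)$ or $x>\rho(\lambda r_2)$, integrate in $y$ first, and then take the limit. Your variable $m=(b-q)\sigma+1$ is exactly the paper's $X-q\sigma$, and the cancellation you highlight, $e^{-\sigma/x^p}e(x)^m=\exp(-X/(qx^p))=e(x)^X$, is precisely what the paper writes directly; you simply fill in the dominated convergence details that the paper leaves to the reader.
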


\begin{proof}
By decomposing the integral region 
$U_2(\lambda)$ into the following two sets:
$$
\{(x,y)\in U_2(\lambda):x\leq \rho(\tilde{r}_2)\},\quad
\{(x,y)\in U_2(\lambda):x > \rho(\tilde{r}_2)\},
$$
the integral $\tilde{Z}_2^{(\lambda)}(\sigma)$ can be computed as
\begin{equation*}
\tilde{Z}_2^{(\lambda)}(\sigma)
=\frac{1}{X-q\sigma}\cdot\frac{1}{\lambda^{X-q\sigma}}
\int_0^{\rho(\tilde{r}_2)} x^{a\sigma} e(x)^{X}dx+
\frac{r_2^{X-q\sigma}}{X-q\sigma}
\int_{\rho(\tilde{r}_2)}^{r_1} x^{a\sigma}e^{-\sigma/x^p}dx.
\end{equation*} 
It can be easily computed that 
the limit of the right-hand side of the above equation as 
$\sigma\to-1/b+0$ is $M(\lambda)$ 
in (\ref{eqn:3.38}).   
\end{proof}

\subsection{Proof of Theorem~3.1}

From (\ref{eqn:3.5}), (\ref{eqn:3.10}), 
when the integrals 
$\tilde{Z}_1^{(\lambda)}(\sigma)$ and 
$\tilde{Z}_2^{(\lambda)}(\sigma)$ 
converge, $Z(\sigma)$ can be estimated as
\begin{equation}\label{eqn:3.39}
\begin{split}
&(1+\lambda^q)^{\sigma}\cdot \tilde{Z}_1^{(\lambda)}(\sigma)+
(1+\lambda^{-q})^{\sigma}\cdot \tilde{Z}_2^{(\lambda)}(\sigma) \\
&\quad\quad\quad\quad\quad\quad\quad\quad <Z(\sigma)< 
\tilde{Z}_1^{(\lambda)}(\sigma)+
\tilde{Z}_2^{(\lambda)}(\sigma).
\end{split}
\end{equation}

First, let us consider the case (i). 
Since Lemmas~3.5 (i) and 3.7 imply 
$$
\lim_{\sigma\to-1/b+0}X^{1-\frac{1-a/b}{p}}\cdot
\tilde{Z}_1^{(\lambda)}(\sigma)=A, 
\quad
\lim_{\sigma\to-1/b+0}X^{1-\frac{1-a/b}{p}}\cdot
\tilde{Z}_2^{(\lambda)}(\sigma)=0,
$$
the estimates (\ref{eqn:3.39}) give
\begin{equation}\label{eqn:3.40}
\begin{split}
(1+\lambda^q)^{-1/b}\cdot A
&\leq
\varliminf_{\sigma\to-1/b+0}X^{1-\frac{1-a/b}{p}}\cdot Z(\sigma)\\
&\leq
\varlimsup_{\sigma\to-1/b+0}X^{1-\frac{1-a/b}{p}}\cdot Z(\sigma) 
\leq A.
\end{split}
\end{equation}
Note that $Z(\sigma)$ is independent of $\lambda$.
Considering the limit  as $\lambda\to 0$ in (\ref{eqn:3.40}), 
we obtain (i). 
The case (ii) can be similarly shown. 

Let us consider the case (iii).
Since the integral $Z(\sigma)$ is a monotone decreasing function
as $\sigma\in(-1/b,0)$, the boundedness 
of $Z(\sigma)$ from (\ref{eqn:3.39}) implies 
the existence of the limit $\lim_{\sigma\to-1/b}Z(\sigma)$. 
Moreover, since $Z(\sigma)$ is independent of $\lambda$,   
the following inequalities can be obtained
from (\ref{eqn:3.39}), (\ref{eqn:3.12}), (\ref{eqn:3.38}). 
\begin{equation}\label{eqn:3.41}
\begin{split}
&\sup\left\{
\frac{L(\lambda)}{(1+\lambda^q)^{1/b}}+
\frac{M(\lambda)}{(1+\lambda^{-q})^{1/b}}
:\lambda>0
\right\} \\
&\quad\quad\leq 
\lim_{\sigma\to-1/b+0}Z(\sigma) \leq 
\inf\{L(\lambda)+M(\lambda):\lambda>0\}.
\end{split}
\end{equation}
Furthermore, the supremum and infimum in (\ref{eqn:3.41}) can be
respectively replaced by the maximum and minimum 
by using the lemma below. 
As a result, the inequalities in (iii) 
in the theorem is obtained.

\begin{lemma}
\begin{equation*}
\begin{split}
&\lim_{\lambda\to 0}L(\lambda)
=0, \quad 
\lim_{\lambda\to 0}M(\lambda)
=\infty, \quad 
\lim_{\lambda\to \infty}L(\lambda)
=\infty, \quad 
\lim_{\lambda\to \infty}M(\lambda)
=0, \\
&\lim_{\lambda\to 0}
(1+\lambda^{q})^{-1/b}\cdot L(\lambda)=0, \quad 
\lim_{\lambda\to 0}
(1+\lambda^{-q})^{-1/b}\cdot M(\lambda)=0, \\
&\lim_{\lambda\to \infty}
(1+\lambda^{q})^{-1/b}\cdot L(\lambda)=0, \quad  
\lim_{\lambda\to \infty}
(1+\lambda^{-q})^{-1/b}\cdot M(\lambda)=0.
\end{split}
\end{equation*}
\end{lemma}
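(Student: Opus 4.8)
The plan is to recall the explicit formulas
\[
L(\lambda)=
\frac{\rho(\lambda r_2)^{1-a/b}}{1-a/b}\cdot\log(\lambda r_2)
+
\frac{\rho(\lambda r_2)^{1-a/b-p}}{q(1-a/b-p)},
\qquad
M(\lambda)=
\frac{b^2}{q(b-a)}\cdot \frac{\rho(\lambda r_2)^{1-a/b}}{\lambda^{q/b}}
+\frac{b}{q}r_2^{q/b}\int_{\rho(\lambda r_2)}^{r_1}
x^{-a/b}\exp(1/(bx^p))dx,
\]
and to analyse them through the behaviour of $\rho(\lambda r_2)$ at the two ends of the parameter range. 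By definition $\rho(y)=(-1/(q\log y))^{1/p}$ for $0\le y<e(r_1)$ and $\rho(y)=r_1$ for $y\ge e(r_1)$, so as $\lambda\to 0$ we have $\lambda r_2\to 0$ and hence $\rho(\lambda r_2)\to 0$, while as $\lambda\to\infty$ we eventually have $\lambda r_2\ge e(r_1)$ and so $\rho(\lambda r_2)=r_1$ is constant. Keeping track of the sign of $1-a/b-p$ (positive in case (iii)) and of the logarithmic factor $\log(\lambda r_2)$ (negative for small $\lambda$), one reads off the limits term by term.

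The key steps, in order, are as follows. First, for $\lambda\to 0$: since $1-a/b>0$, $1-a/b-p>0$ and $\rho(\lambda r_2)\to 0$, both summands in $L(\lambda)$ tend to $0$ (the first even faster because of the extra $\log(\lambda r_2)\to-\infty$, which is dominated by the power $\rho(\lambda r_2)^{1-a/b}\to 0$); hence $\lim_{\lambda\to 0}L(\lambda)=0$. For $M(\lambda)$ as $\lambda\to 0$, the integral $\int_{\rho(\lambda r_2)}^{r_1}x^{-a/b}\exp(1/(bx^p))dx$ diverges to $+\infty$ because the lower endpoint $\rho(\lambda r_2)\to 0$ and the integrand behaves like $x^{-a/b}\exp(1/(bx^p))$, which is non-integrable at $0$; this forces $\lim_{\lambda\to 0}M(\lambda)=\infty$. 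Second, for $\lambda\to\infty$: here $\rho(\lambda r_2)=r_1$ is constant, so the integral in $M(\lambda)$ vanishes, and $M(\lambda)\sim \text{const}\cdot\lambda^{-q/b}\to 0$; meanwhile $\log(\lambda r_2)\to+\infty$ gives $\lim_{\lambda\to\infty}L(\lambda)=\infty$. Third, for the four weighted limits, one combines these facts with the elementary asymptotics of the weights: as $\lambda\to 0$, $(1+\lambda^{-q})^{-1/b}\sim\lambda^{q/b}\to 0$, which kills the first term of $M(\lambda)$ (giving the finite constant $\frac{b^2}{q(b-a)}r_1^{1-a/b}$ a factor $\lambda^{q/b}\to0$) and also kills $\lambda^{q/b}$ times the second term since the latter, although infinite, grows only like $\log(1/\lambda)$ slower than any negative power — more precisely $\lambda^{q/b}\int_{\rho(\lambda r_2)}^{r_1}x^{-a/b}\exp(1/(bx^p))dx\to 0$ because $\rho(\lambda r_2)^{\text{positive power}}$ times that integral stays bounded while $\lambda^{q/b}/\rho(\lambda r_2)^{\text{that power}}\to 0$ (using $\rho(\lambda r_2)^{-p}=-q\log(\lambda r_2)\to\infty$ only logarithmically). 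The remaining three weighted limits are immediate: $(1+\lambda^q)^{-1/b}L(\lambda)\to 0$ as $\lambda\to 0$ since $L(\lambda)\to 0$; and as $\lambda\to\infty$, $(1+\lambda^q)^{-1/b}\sim\lambda^{-q/b}$ decays like a power while $L(\lambda)$ grows only like $\log\lambda$, and $(1+\lambda^{-q})^{-1/b}\to 1$ times $M(\lambda)\to 0$.

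The main obstacle is the bookkeeping in the two ``$\infty\cdot 0$'' situations, namely $\lim_{\lambda\to 0}(1+\lambda^{-q})^{-1/b}M(\lambda)=0$. One must verify that the logarithmic blow-up of the integral $\int_{\rho(\lambda r_2)}^{r_1}x^{-a/b}\exp(1/(bx^p))dx$ — which comes from $\rho(\lambda r_2)^{-p}=-q\log(\lambda r_2)$ appearing in the exponent near the lower endpoint — is genuinely dominated by the algebraic decay $\lambda^{q/b}$. The clean way to see this is the substitution $x\mapsto \rho(\lambda r_2)/x$ or a direct Laplace-type estimate: $\int_{\rho(\lambda r_2)}^{r_1}x^{-a/b}\exp(1/(bx^p))dx \le C\,\rho(\lambda r_2)^{1-a/b-p}\exp(1/(b\rho(\lambda r_2)^p)) = C\,\rho(\lambda r_2)^{1-a/b-p}\,(\lambda r_2)^{-q/b}$, using $\exp(1/(b\rho(\lambda r_2)^p))=\exp((-q\log(\lambda r_2))/b)=(\lambda r_2)^{-q/b}$; multiplying by $\lambda^{q/b}$ leaves $C r_2^{-q/b}\rho(\lambda r_2)^{1-a/b-p}\to 0$ since $1-a/b-p>0$. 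All the other limits are then routine consequences of the monotone behaviour of $\rho$ and of power-versus-logarithm comparisons, and the lemma follows.
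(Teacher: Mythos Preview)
The paper itself gives no proof of this lemma; it simply states that ``the proof of the above lemma is easy, so it is left to the readers.'' Your write-up therefore supplies what the paper omits, and the overall approach---reading off the behaviour of $\rho(\lambda r_2)$ at the two ends and then handling the one genuine $0\cdot\infty$ case $\lim_{\lambda\to 0}(1+\lambda^{-q})^{-1/b}M(\lambda)$ via a Laplace-type bound---is correct and is essentially the only reasonable route.

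Two small points worth tightening. First, in the discussion of $\lambda^{q/b}$ times the first term of $M(\lambda)$ as $\lambda\to 0$, you write that one obtains ``the finite constant $\tfrac{b^2}{q(b-a)}r_1^{1-a/b}$''; in fact $\lambda^{q/b}$ times that term equals $\tfrac{b^2}{q(b-a)}\rho(\lambda r_2)^{1-a/b}$, which goes to $0$ (not merely to a constant) since $\rho(\lambda r_2)\to 0$. Second, in your Laplace estimate $\int_{\rho(\lambda r_2)}^{r_1}x^{-a/b}e^{1/(bx^p)}\,dx\le C\,\rho(\lambda r_2)^{1-a/b-p}(\lambda r_2)^{-q/b}$, the sharp exponent is actually $1-a/b+p$ rather than $1-a/b-p$ (differentiate $x^{1-a/b+p}e^{1/(bx^p)}$ to see this); your stated bound is still a valid upper estimate since $\epsilon^{1-a/b-p}\ge\epsilon^{1-a/b+p}$ for $\epsilon<1$, and either exponent is positive here, so the conclusion is unaffected. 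With these cosmetic corrections the argument is complete.
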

The proof of the above lemma is easy, 
so it is left to the readers. 



\section{Proof of Theorem~2.1}

For $r_1,r_2\in(0,1)$, let 
$$
U(r_1,r_2):=\{(x,y)\in\R^n:|x|<r_1,|y|<r_2\}.
$$
The behavior of $Z_f(\varphi)$ can be appropriately
approximated by that of a more simple function 
$Z_{U(r_1,r_2)}(\sigma)$ (see (\ref{eqn:3.1})). 
Furthermore, 
under the assumption that $q$ is even, 
$f(x,y)=f(|x|,|y|)$ for any $(x,y)\in\R^2$, 
which implies  
\begin{equation}\label{eqn:5.1}
Z_{U(r_1,r_2)}(\sigma)=4Z_{U_+(r_1,r_2)}(\sigma)=4Z(\sigma).
\end{equation} 
Therefore, Theorem~2.1 can be proved by using 
Theorem~3.1 as follows.

\vspace{.5 em}

{\bf The cases (i), (ii).}\quad
For any $\epsilon>0$, 
there exist $r_1, r_2\in(0,1)$ such that
\begin{equation*}
\varphi(0,0)-\epsilon \leq 
\varphi(x,y)\leq 
\varphi(0,0)+\epsilon
\quad \mbox{ for $(x,y)\in U(r_1,r_2)$.}
\end{equation*}
These inequalities imply that
\begin{equation}\label{eqn:5.2}
\begin{split}
(\varphi(0,0)-\epsilon) \cdot 
Z_{{U(r_1,r_2)}}(\sigma)
&\leq 
Z_f(\varphi)(\sigma)-
\int_{U\setminus U(r_1,r_2)}|f(x,y)|^{\sigma}\varphi(x,y)dxdy\\
&\leq
(\varphi(0,0)+\epsilon) \cdot 
Z_{U(r_1,r_2)}(\sigma).
\end{split}
\end{equation}
We remark that the integral in (\ref{eqn:5.2}) converges
and is bounded by a positive constant
which is independent of $\sigma$
since $f$ does not vanish on $U\setminus U(r_1,r_2)$.

As a result, 
the inequalities (\ref{eqn:5.2}) and 
Theorem~3.1 easily imply (i), (ii) in Theorem~2.1.

\vspace{.5 em}
{\bf The case (iii).} \quad 
We define
\begin{equation*}
\varphi_+(x,y)
=\max\{\varphi(x,y),0\}
\,\,\, \mbox{ and  }\,\,\,
\varphi_-(x,y)
=\max\{-\varphi(x,y),0\}.
\end{equation*}
Of course, 
$\varphi(x,y)=\varphi_+(x,y)-\varphi_-(x,y)$ holds.

Now, 
let $R_1,R_2$ be positive constants such that
the support of $\varphi$ is contained in 
$U(R_1,R_2)$. 
Then, Theorem~3.1 (iii) implies that 
there exist  $\delta>0$ and $C_{\pm}(R_1,R_2)>0$,
which depends on $R_1, R_2$ and is independent of $\sigma$, 
such that 
\begin{equation}\label{eqn:5.3} 
\begin{split}
Z_f(\varphi_{\pm})(\sigma)
&\leq
\max_{(x,y)\in U} \varphi_{\pm}(x,y) \cdot 
Z_{{\rm Supp}(\varphi_{\pm})}(\sigma) \\
&\leq
\max_{(x,y)\in U} \varphi_{\pm}(x,y) 
\cdot Z_{U(R_1,R_2)}(\sigma) \\
&\leq 
C_{\pm}(R_1,R_2)\cdot
\max_{(x,y)\in U} \varphi_{\pm}(x,y),
\end{split}
\end{equation}
for $\sigma\in(-1/b,-1/b+\delta)$.
Since
$Z_f(\varphi_{\pm})(\sigma)$ are monotone decreasing functions 
of $\sigma\in(-1/b,0)$,
the above estimates easily imply that 
there exist nonnegative constants 
$B_+(\varphi)$ and $B_-(\varphi)$ such that 
\begin{equation*}
\lim_{\sigma\to-1/b+0} Z_f(\varphi_{\pm})(\sigma)=B_{\pm}(\varphi).
\end{equation*}
Let $B(\varphi):=B_+(\varphi)-B_-(\varphi)$, then
we can get the limit in (iii).
Note that when $\varphi$ satisfies (\ref{eqn:1.4}),
$B(\varphi)=B(\varphi_+)$ and $B(\varphi_-)=0$ hold, 
so $B(\varphi)$ is positive.

\begin{remark}
The inequalities in (\ref{eqn:5.3}) imply the continuous 
property for the distribution defined by the map:
$\varphi\mapsto 
B(\varphi)=\lim_{\sigma\to-1/b} Z_f(\varphi)(\sigma)$.
\end{remark}

\section{Landau type theorem for local zeta functions}

In this section, we deal with $Z_f(\varphi)(s)$
in the general dimensional case, 
i.e. 
\begin{equation}\label{eqn:6.1}
Z_f(\varphi)(s)=\int_{\mathbb{R}^n}|f(x)|^s \varphi(x)dx
\quad\quad \mbox{for $s\in \C$,}
\end{equation}
where
$f,\varphi$ are real-valued smooth functions 
defined on an open neighborhood $U$ of the origin 
in $\mathbb{R}^n$
and the support of $\varphi$ is contained in $U$.

Lemma~1.1  in the Introduction follows from 
the following theorem.

\begin{theorem}\label{prop:3.1}
Suppose 
that $f(0)=0$, $|f(x)|<1$ and $\varphi(x)\geq 0$ on $U$.
Let $\rho$ be nonpositive number such that
the integral $Z_f(\varphi)(s)$ in (\ref{eqn:6.1}) 
converges if ${\rm Re}(s)>\rho$. 
If 
$Z_f(\varphi)$ can be analytically continued 
as a holomorphic function
to some open neighborhood of $s=\rho$,
then there exists a positive number $\delta$
such that the integral $Z_f(\varphi)(\rho-\delta)$
converges.
\end{theorem}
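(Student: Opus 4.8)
The plan is to mimic the classical proof of Landau's theorem on Dirichlet series with nonnegative coefficients, transplanted to the integral setting. Write $Z(s) := Z_f(\varphi)(s)$, which is holomorphic on $\mathrm{Re}(s) > \rho$ and, by hypothesis, extends holomorphically to an open disc $D$ centered at $s = \rho$. Pick a real point $s_0 > \rho$ inside $D$ with $s_0$ close enough to $\rho$ that the closed disc of radius $R := s_0 - \rho + \eta$ around $s_0$ is contained in the region of holomorphy (the union of $\{\mathrm{Re}(s)>\rho\}$ and $D$), for some small $\eta > 0$; then $R > s_0 - \rho$, so $\rho - \eta$ lies in this disc. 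The goal is to show the Taylor series of $Z$ about $s_0$ converges at the real point $\rho - \eta$ and that its value equals the convergent integral $Z_f(\varphi)(\rho-\eta)$.

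First I would compute the Taylor coefficients of $Z$ at $s_0$ by differentiating under the integral sign: for $\mathrm{Re}(s) > \rho$ one has $Z^{(k)}(s) = \int_U |f(x)|^s (\log|f(x)|)^k \varphi(x)\, dx$, with differentiation under the integral justified on $\mathrm{Re}(s) > \rho$ by local uniform convergence (as noted in the Introduction, and using $|f(x)|<1$ so that $\log|f|$ is negative and its powers are controlled by small negative powers of $|f|$). Hence the $k$-th Taylor coefficient at $s_0$ is $\frac{1}{k!}\int_U |f(x)|^{s_0}(\log|f(x)|)^k \varphi(x)\, dx$. The key sign observation is that since $|f(x)| < 1$, we have $\log|f(x)| < 0$, so $(-\log|f(x)|)^k \geq 0$; combined with $\varphi \geq 0$, every integral $\int_U |f|^{s_0}(-\log|f|)^k\varphi\,dx$ is nonnegative. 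Therefore the Taylor series of $Z$ about $s_0$, evaluated at the real point $\rho - \eta = s_0 - (s_0 - \rho + \eta) = s_0 - R'$ with $R' := s_0 - \rho + \eta < R$, is
\begin{equation*}
\sum_{k=0}^{\infty} \frac{Z^{(k)}(s_0)}{k!}(-R')^k
= \sum_{k=0}^{\infty} \frac{(R')^k}{k!}\int_U |f(x)|^{s_0}(-\log|f(x)|)^k \varphi(x)\, dx,
\end{equation*}
a series of nonnegative terms. Since $\rho - \eta$ lies in the disc of convergence (radius $\geq R > R'$), this series converges to $Z(\rho - \eta)$.

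Now I would swap the sum and the integral by the Tonelli/monotone convergence theorem, which is legitimate precisely because all terms are nonnegative: the convergence of the series above forces
\begin{equation*}
\int_U |f(x)|^{s_0} \left(\sum_{k=0}^{\infty} \frac{(R' (-\log|f(x)|))^k}{k!}\right) \varphi(x)\, dx
= \int_U |f(x)|^{s_0} e^{-R'\log|f(x)|} \varphi(x)\, dx
= \int_U |f(x)|^{s_0 - R'} \varphi(x)\, dx < \infty,
\end{equation*}
i.e. $Z_f(\varphi)(s_0 - R') = Z_f(\varphi)(\rho - \eta)$ converges as an ordinary (absolutely convergent) integral. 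Setting $\delta := \eta > 0$ gives the conclusion, and Lemma~1.1 follows by taking $\rho = -c_0(f)$ and noting that convergence of $Z_f(\varphi)(-c_0(f) - \delta)$ would, via the nonnegativity of $\varphi$ and the definition of $c_0(f)$ as a supremum of integrability exponents on shrinking neighborhoods, contradict $c_0(f)$ being that supremum.

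The step I expect to be the main obstacle is the bookkeeping around differentiation under the integral sign and the radius-of-convergence argument: one must be careful that the disc on which $Z$ is holomorphic genuinely contains a real point strictly to the left of $\rho$, which is why choosing $s_0$ sufficiently close to $\rho$ (so that the disc of holomorphy about $s_0$ has radius exceeding $s_0 - \rho$) is essential, and one must verify the differentiation-under-the-integral bounds uniformly on compact subsets of $\mathrm{Re}(s) > \rho$ using the hypothesis $|f| < 1$ to tame the logarithmic factors. The sum–integral interchange itself is painless once nonnegativity is in hand.
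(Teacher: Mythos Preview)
Your argument is correct and follows essentially the same Landau-type approach as the paper: Taylor-expand about a real point to the right of $\rho$, compute the derivatives by differentiation under the integral, use $|f|<1$ and $\varphi\geq 0$ to obtain nonnegativity of all terms, then swap sum and integral by Tonelli and resum the exponential series. The only cosmetic difference is that the paper centers the expansion at the fixed point $\rho+1$ (taking a disc of radius $1+2\delta$) rather than at a point $s_0$ near $\rho$; note also that in your notation one actually has $R'=R$, not $R'<R$, but since the \emph{closed} disc of radius $R$ lies in the open region of holomorphy the Taylor radius strictly exceeds $R$ and your conclusion is unaffected.
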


Indeed, 
if $Z_f(\varphi)(s)$ can be holomorphically continued across
the point $s=-c_0(f)$,
then 
the above theorem implies that
$Z_f(\varphi)(s)$ becomes a holomorphic function 
on the region: ${\rm Re}(s)>-c_0(f)-\delta$ 
with some positive $\delta$, which
is a contradiction to (\ref{eqn:1.5}). 

The above property of local zeta functions
itself is interesting and 
is analogous to Landau's theorem on the Dirichlet series
$\sum_{n=1}^{\infty} a_n n^{-s}$
where $a_n$ are nonnegative numbers
(see \cite{Zag81}).

\begin{proof}
From the assumption,
$Z_f(\varphi)$ can be considered as a holomorphic function
on the region: ${\rm Re}(s)>\rho$ and, moreover, there exist 
an open neighborhood $V$ of $s=\rho$ and 
a holomorphic function $\tilde{Z}$ defined 
on the set $V\cup\{s\in \mathbb{C}: {\rm Re}(s)>\rho\}$ 
such that 
$\tilde{Z}=Z_f(\varphi)$ on the region: ${\rm Re}(s)>\rho$.

Now, there exists a positive number $\delta$
such that
the disc: 
$$
D:=\{z\in\C:|z-(\rho+1)|<1+2\delta\}
$$
is contained in the region 
$V\cup\{s\in \mathbb{C}: {\rm Re}(s)>\rho\}$.
Since $\tilde{Z}$ is holomorphic on $D$,
its Taylor series converges to the value of $\tilde{Z}(s)$ 
at any point of $D$,
i.e.,
\begin{equation}\label{eqn:6.2}
\tilde{Z}(s)=\sum_{j=0}^{\infty}
\frac{1}{j!}\frac{d^j\tilde{Z}}{ds^j}(\rho+1)(s-(\rho+1))^j
\quad \mbox{ for $s\in D$.}
\end{equation}
Since the point $s=\rho-\delta$ is contained 
in $D$, 
we have
\begin{equation}\label{eqn:6.3}
\tilde{Z}(\rho-\delta)=\sum_{j=0}^{\infty}
\frac{1}{j!}\frac{d^j \tilde{Z}}{ds^j}(\rho+1)(-\delta-1)^j
\end{equation}
and this series converges.
On the other hand, we have
\begin{eqnarray}\label{eqn:6.4}
\frac{d^j \tilde{Z}}{d s^j}(s)=
\int_{\mathbb{R}^n}|f(x)|^s (\log |f(x)|)^{j}
\varphi(x)dx \quad \mbox{for $j\in\N$}
\end{eqnarray}
if $s$ satisfies ${\rm Re}(s)>\rho$.
Indeed, it is easy to show the possibility of
the exchange of integral and derivatives. 
Substituting (\ref{eqn:6.4}) to (\ref{eqn:6.3}), 
we have
\begin{equation}\label{eqn:6.5}
\begin{split}
\tilde{Z}(\rho-\delta)
&=\sum_{j=0}^{\infty}
\frac{1}{j!}(-\delta-1)^j
\int_{\mathbb{R}^n}|f(x)|^{\rho+1} (\log |f(x)|)^{j}\varphi(x)dx\\
&=
\sum_{j=0}^{\infty}
\int_{\mathbb{R}^n}|f(x)|^{\rho+1}
\frac{1}{j!}
((-\delta-1)\log |f(x)|)^{j}\varphi(x)dx.
\end{split}
\end{equation}
Since 
all terms of the series in \eqref{eqn:6.5} are positive
and
the series
converges, 
the order of the summation and the integral 
can be exchanged. 
Therefore, 
\begin{equation*}
\begin{split}
\tilde{Z}(\rho-\delta)
&=
\int_{\mathbb{R}^n}|f(x)|^{\rho+1}
\left(
\sum_{j=0}^{\infty}\frac{1}{j!}
((-\delta-1)\log |f(x)|)^{j}
\right)
\varphi(x)dx \\
&=
\int_{\mathbb{R}^n}|f(x)|^{\rho+1}e^{(-\delta-1)\log|f(x)|}\varphi(x)dx\\
&=
\int_{\mathbb{R}^n}|f(x)|^{\rho-\delta}\varphi(x)dx.
\end{split}
\end{equation*}
The above equalities imply that 
the last integral converges. 
\end{proof}

\section{Discussion and open questions}

\subsection{Singularities of $Z_f(\varphi)(s)$}

First, 
let us consider the case when
$f$ is as in (\ref{eqn:2.1}) and $\varphi$ satisfies 
the condition (\ref{eqn:1.4}). 
As mentioned in Collorary 2.2, 
the singularity of $Z_f(\varphi)(s)$ at $s=-1/b$ 
is different from a pole. 
To be more elementary, 
the following question is naturally raised. 

\begin{question}
Is the singularity of $Z_f(\varphi)$ at $s=-1/b$ isolated or not?
\end{question}

If this singularity was isolated, 
then it must be an essential singularity. 
At present, 
it seems impossible to answer this question 
from the information from Theorem~2.1 only. 

Next, let us consider  
more global property of $Z_f(\varphi)(s)$. 
It should be expected that
$Z_f(\varphi)(s)$ can be holomorphically extended 
to a wider domain containing the region: ${\rm Re}(s)>-1/b$.
The following question is considered 
as a first step to this problem. 

\begin{question}
Does $Z_f(\varphi)$ have another singularity on 
the vertical line: ${\rm Re}(s)=-c_0(f)$?
\end{question}


\subsection{Openness problem}

Let $f$ be a smooth function with the condition 
(\ref{eqn:1.2}). 
Let us consider the following subset in $\R$:
\begin{equation*}
H(f):=
\left\{\mu>0:
\begin{array}{l} 
\mbox{there exists an open neighborhood $V$ of} \\
\mbox{the origin in $U$ such that $|f|^{-\mu}\in L^1(V)$}
\end{array}
\right\}.
\end{equation*}
(Of course, $c_0(f)=\sup H(f)$.)
When $f$ is real analytic, the set $H(f)$ is {\it open} in $\R$
from the fact that $s=-c_0(f)$ is a pole for $Z_f(\varphi)$
for $\varphi$ satisfying (\ref{eqn:1.4}).
Without the real analyticity assumption, 
our observation implies that 
$H(f)$ is not always open.  
More precisely, in the case of (\ref{eqn:2.1}),  
the openness of $H(f)$ depends on 
the relationship among the parameters $a,b,p$. 
Generally, when $f$ is a smooth function, 
the following question seems interesting. 

\begin{question}
Which condition on $f$ gives the openness (or closedness)
of $H(f)$? 
\end{question}

An analogous problem has been deeply investigated 
in the case of complex variables from the viewpoint of 
complex geometry. 
The {\it openness conjecture}, 
raised by Demailly and Koll\'ar \cite{DeK01}, 
is the following: 
``If $\phi$ is plurisubharmonic, 
then the set $H(e^{-\phi})$ is always open.''   
This conjecture has been affirmatively solved 
in \cite{FaJ05jams},  \cite{FaJ05inv},  \cite{Ber13}. 
Our observation indicates that the openness of $H(f)$ needs 
some kind of good property of $f$.

\subsection{Oscillatory integrals}
Let us consider an oscillatory integral of the form:
\begin{equation*}
I_f(\varphi)(\tau):=
\int_{\R^2}e^{i \tau f(x,y)}\varphi(x,y)dxdy 
\quad \mbox{for $\tau>0$},
\end{equation*}
where  $f$, $\varphi$ are as in (\ref{eqn:1.1}) 
and they satisfy the conditions 
(\ref{eqn:1.2}), (\ref{eqn:1.4}).

It is known (see \cite{Var76}, \cite{AGV88}, etc.) 
that there is a deep relationship
between the behavior of oscillatory integrals at infinity
and the distribution of poles of local zeta functions. 
Indeed, the Mellin transformation gives a
clear relationship between oscillatory integrals 
and some functions similar to local zeta functions. 

First, let us consider the case when $f$ is real analytic. 
As mentioned in the Introduction, 
the integral $Z_f(\varphi)(s)$ can be
analytically continued as
a meromorphic function to the whole complex plane. 
Furthermore, under some assumption,   
its leading pole exists at $s=-1/d(f)$ and 
its order is $m(f)$.  
(Note that $m(f)$ is the positive integer 
determined by some topological information 
of the Newton polyhedron of $f$.) 
By using this fact, we have
\begin{equation}\label{eqn:7.1}
\lim_{\tau\to+\infty}
\tau^{1/d(f)}(\log \tau)^{-m(f)+1}
I_f(\varphi)(\tau)=C_f(\varphi),
\end{equation}
where $C_f(\varphi)$ is a positive constant independent of 
$\tau$. 

Next, let us consider the smooth case. 
Although the formula (\ref{eqn:7.1}) 
can be directly generalized in many smooth cases 
(\cite{Gre09}, \cite{IkM11jfaa}, \cite{KaN16jmsut}), 
there exist examples for which
the behavior of $I_f(\varphi)$ is different from (\ref{eqn:7.1}). 
In \cite{KaN17},
the authors investigate the case when
\begin{equation}\label{eqn:7.2}
f(x,y)=y^b+e^{-1/|x|^p},
\end{equation}
where $p>0$ and
$b\in\Z$ with $b\geq 2$, and obtain the following:
\begin{equation*}
\lim_{\tau\to+\infty}\tau^{1/b}(\log \tau)^{1/p}\cdot
I_f(\varphi)(\tau)=C_b\varphi(0,0),
\end{equation*}
where $C_b$ is a nonzero constant depending only on
$b$ and is explicitly computed. 
We remark that $d(f)=b$ and $m(f)=1$ in this case. 
Since (\ref{eqn:7.2}) is a special case of (\ref{eqn:2.1}) 
($a=0$, $b=q$), 
Theorem~2.1 implies that if $p>1$, then
\begin{equation*}
\lim_{\sigma\to -1/b+0}
(b\sigma+1)^{-1/p+1}\cdot Z_f(\varphi)(\sigma)=
C\varphi(0,0),
\end{equation*} 
where $C$ is a positive constant. 

Now, we are interested in how flat functions affect
the behavior of $I_f(\varphi)(\tau)$.
Observing the case when $f$ is real analytic or
$f$ is as in (\ref{eqn:7.2}), 
one can easily recognize some correspondence between the bahaviors of
$Z_f(\varphi)(\sigma)$ and $I_f(\varphi)(\tau)$.  
It will be valuable to affirmatively answer the following question. 
\begin{question}
When a positive limit of 
$(\sigma+c_0(f))^{\alpha}Z_f(\varphi)(\sigma)$
as $\sigma\to-c_0(f)+0$ exists, 
does a nonvanishing limit of 
$\tau^{c_0(f)}(\log \tau)^{-\alpha+1}\cdot I_f(\varphi)(\tau)$ 
as $\tau\to+\infty$ exist?

In particular,
let $f$ be as in (\ref{eqn:2.1}) with 
$p>1-a/b$, then 
does the following hold?
\begin{equation*}
\lim_{\tau\to +\infty}
\tau^{1/b}(\log \tau)^{\frac{1-a/b}{p}}\cdot I_f(\varphi)(\tau)=
C\varphi(0,0),
\end{equation*}
where $C$ is a positive constant
which is independent of $\tau$.
\end{question}

\vspace{1 em}

{\sc Acknowledgements.}\quad 
The authors would like to express their sincere gratitude 
to the referee for giving the authors valuable comments. 
This work was partially supported by 
JSPS KAKENHI Grant Numbers 
JP15K04932, JP15K17565, JP15H02057.



\begin{thebibliography}{99}
\bibitem{AGV88}
V. I. Arnold, S. M. Gusein-Zade and A. N. Varchenko: 
   {\it Singularities of Differentiable Maps II}, 
   Birkh$\ddot{{\rm a}}$user, 1988. 
%
\bibitem{Ati70}
M. F. Atiyah:
Resolution of singularities and division of distributions, 
Comm. Pure Appl. Math. {\bf 23} (1970), 145--150. 

\bibitem{Ber13}
B. Berndtsson: 
The openness conjecture for plurisubharmonic functions, 
2013. arXiv 1305.5781.


\bibitem{BeG69}
I. N. Bernstein and S. I. Gel'fand:
Meromorphy of the function $P^{\lambda}$,  
Funktsional. Anal.  Prilozhen. {\bf 3} (1969), 
84--85. 








\bibitem{CGP13}
T. C. Collins, A. Greenleaf and M. Pramanik:
A multi-dimensional resolution of 
singularities with applications to analysis, 
Amer. J. Math.  {\bf 135}  (2013), 
1179--1252. 

\bibitem{DeK01}
J.-P. Demailly and J. Koll\'ar: 
Semi-continuity of complex singularity exponents
and K\"ahler-Einstein metrics on Fano orbifolds, 
Ann. Sci. \'Ecole Norm. Sup. 
{\bf 34} (2001), 525--556.


\bibitem{DNS05} 
J. Denef, J. Nicaise and P. Sargos: 
Oscillating integrals and Newton polyhedra, 
J. Anal. Math. {\bf 95} (2005), 147--172. 

\bibitem{DS89}
J. Denef and P. Sargos:
Poly\`edra de Newton et distribution $f_+^s$. I,
J. Anal. Math. {\bf 53} (1989), 201--218.

\bibitem{DS92}
\bysame:
Poly\`edra de Newton et distribution $f_+^s$. II, 
Math. Ann. {\bf 293} (1992), 193--211. 

\bibitem{FaJ05jams}
C. Favre and M. Jonsson:  
Valuations and multiplier ideals, 
J. Amer. Math. Soc. {\bf 18} (2005), 655--684. 

\bibitem{FaJ05inv}
\bysame: 
Valuative analysis of planar plurisubharmonic functions,
Invent. Math. {\bf 162} (2005), 271--311.



\bibitem{GeS64}
I. M. Gel'fand and G. E. Shilov: 
{\it Generalized functions --- properties and operations}, 
Volume I, Academic Press, 1964.

\bibitem{Gre06}
M. Greenblatt:
Newton polygons and local integrability of 
negative powers of smooth functions in the plane, 
Trans. Amer. Math. Soc.  {\bf 358}  (2006),  657--670. 

\bibitem{Gre09}
\bysame:
The asymptotic behavior of degenerate oscillatory integrals in two dimensions,
J. Funct. Anal. {\bf 257} (2009), 1759--1798.

\bibitem{Hir64}
H. Hironaka: 
   Resolution of singularities of an algebraic variety 
   over a field of characteristic zero I, II, Ann. of Math. 
   {\bf 79} (1964), 109--326. 



%

\bibitem{IkM11tams}
I. A. Ikromov and D. M\"uller: 
On adapted coordinate systems, 
Trans. Amer. Math. Soc. {\bf 363} (2011), 2821--2848. 

\bibitem{IkM11jfaa}
\bysame:
Uniform estimates for the Fourier transform of surface carried measures 
in $\mathbb{R}^3$ and an application to Fourier restriction, 
J. Fourier Anal. Appl. {\bf 17} (2011), 1292--1332.





\bibitem{KaN16jmsut}
J. Kamimoto and T. Nose:
Toric resolution of singularities in a certain class 
of $C^{\infty}$ functions and asymptotic analysis of 
oscillatory integrals, 
J. Math. Soc. Univ. Tokyo, {\bf 23} (2016), 425--485.

\bibitem{KaN16tams}
\bysame:
Newton polyhedra and weighted oscillatory integrals
with smooth phases, 
Trans. Amer. Math. Soc. {\bf 368} (2016), 5301--5361.

\bibitem{KaN17}
\bysame:
Asymptotic limit of oscillatory integrals with certain smooth phases, 
RIMS K\^oky\^uroku Bessatsu {\bf B63} (2017), 103--114.

\bibitem{OkT13}
T. Okada and K. Takeuchi:
Coefficients of the poles of local zeta functions and 
their applications to oscillating integrals,
Tohoku Math. J.   {\bf 65}  (2013),  159--178. 


\bibitem{PSS99}
D. H. Phong, E. M. Stein and J. A. Sturm: 
On the growth and stability of real-analytic functions, 
Amer. J. Math. {\bf 121} (1999), 519--554.


\bibitem{Var76}
A. N. Varchenko: 
   Newton polyhedra and estimation of oscillating 
   integrals, Functional Anal. Appl., {\bf 10-3} (1976), 175--196.

\bibitem{Zag81}
D. B. Zagier: 
{\it Zetafunktionen und quadratische K\"orper}, (German),
Springer-Verlag, Berlin-New York, 1981. 


\end{thebibliography}
\end{document}